\tikzstyle{red_f}=[fill=red, draw=black, shape=rectangle, tikzit category={functional_nodes}]
\tikzstyle{green_f}=[fill=green, draw=black, shape=rectangle, tikzit category={functional_nodes}]
\tikzstyle{dark_blue3_f}=[fill={rgb,255: red,8; green,71; blue,125}, draw=black, shape=rectangle, tikzit category={functional_nodes}]
\tikzstyle{light_blue3_f}=[fill={rgb,255: red,207; green,226; blue,243}, draw=black, shape=rectangle, tikzit category={functional_nodes}]
\tikzstyle{dark_cornflower_blue2_f}=[fill={rgb,255: red,17; green,85; blue,204}, draw=black, shape=rectangle, tikzit category={functional_nodes}]
\tikzstyle{red_nf}=[fill=red, draw=black, shape=circle, tikzit category=nodes]
\tikzstyle{green_nf}=[fill=green, draw=black, shape=circle, tikzit category=nodes]
\tikzstyle{dark_orange2_nf}=[fill={rgb,255: red,180; green,95; blue,6}, draw=black, shape=circle, tikzit category=nodes]
\tikzstyle{magenta_nf}=[fill=magenta, draw=black, shape=circle, tikzit category=nodes]
\tikzstyle{new style 0}=[fill={rgb,255: red,153; green,0; blue,255}, draw=black, shape=circle]
\tikzstyle{dark_yellow1}=[fill={rgb,255: red,241; green,194; blue,50}, draw=black, shape=circle, tikzit category=nodes, minimum size=6mm]
\tikzstyle{dark_yellow2_nf}=[fill={rgb,255: red,191; green,144; blue,0}, draw=black, shape=circle, tikzit category=nodes, minimum size=6mm]
\tikzstyle{dark_cornflower_blue2_nf}=[fill={rgb,255: red,17; green,85; blue,204}, draw=black, shape=circle, tikzit category=nodes, minimum size=6mm]
\tikzstyle{dark_green1_nf}=[fill={rgb,255: red,106; green,168; blue,79}, draw=black, shape=circle, tikzit category=nodes, minimum size=6mm]
\tikzstyle{dark_green2_nf}=[fill={rgb,255: red,56; green,118; blue,29}, draw=black, shape=circle, tikzit category=nodes, minimum size=6mm]
\tikzstyle{cornflower_blue_nf}=[fill={rgb,255: red,74; green,134; blue,232}, draw=black, shape=circle, tikzit category=nodes, minimum size=6mm]
\tikzstyle{light_blue3_nf}=[fill={rgb,255: red,207; green,226; blue,243}, draw=black, shape=circle, tikzit category=nodes, minimum size=6mm]
\tikzstyle{dashed}=[-, dotted]
\tikzstyle{new edge style 0}=[-, new atom]
\tikzstyle{red_f}=[fill=white, draw=black, shape=rectangle, tikzit category={functional_nodes}]
\tikzstyle{green_f}=[fill=white, draw=black, shape=rectangle, tikzit category={functional_nodes}]
\tikzstyle{dark_blue3_f}=[fill=white, draw=black, shape=rectangle, tikzit category={functional_nodes}]
\tikzstyle{light_blue3_f}=[fill=white, draw=black, shape=rectangle, tikzit category={functional_nodes}]
\tikzstyle{dark_cornflower_blue2_f}=[fill=white, draw=black, shape=rectangle, tikzit category={functional_nodes}]
\tikzstyle{red_nf}=[fill=white, draw=black, shape=circle, tikzit category=nodes]
\tikzstyle{green_nf}=[fill=white, draw=black, shape=circle, tikzit category=nodes]
\tikzstyle{dark_orange2_nf}=[fill=white, draw=black, shape=circle, tikzit category=nodes]
\tikzstyle{magenta_nf}=[fill=white, draw=black, shape=circle, tikzit category=nodes]
\tikzstyle{new style 0}=[fill=white, draw=black, shape=circle]
\tikzstyle{dark_yellow1}=[fill=white, draw=black, shape=circle, tikzit category=nodes, minimum size=6mm]
\tikzstyle{dark_yellow2_nf}=[fill=white, draw=black, shape=circle, tikzit category=nodes, minimum size=6mm]
\tikzstyle{dark_cornflower_blue2_nf}=[fill=white, draw=black, shape=circle, tikzit category=nodes, minimum size=6mm]
\tikzstyle{dark_green1_nf}=[fill=white, draw=black, shape=circle, tikzit category=nodes, minimum size=6mm]
\tikzstyle{dark_green2_nf}=[fill=white, draw=black, shape=circle, tikzit category=nodes, minimum size=6mm]
\tikzstyle{cornflower_blue_nf}=[fill=white, draw=black, shape=circle, tikzit category=nodes, minimum size=6mm]
\tikzstyle{light_blue3_nf}=[fill=white, draw=black, shape=circle, tikzit category=nodes, minimum size=6mm]
\tikzstyle{dashed}=[-, dotted]
\tikzstyle{new edge style 0}=[-, new atom]
\title[Functional reducts of the countable atomless Boolean algebra]{Functional reducts of the countable atomless Boolean algebra}
\author[B. Bodor]{Bertalan Bodor}
\address[1,2,3]{E\"{o}tv\"{o}s Lor\'{a}nd University, 
          Department of Algebra and Number Theory,
         Hungary, 1117 Budapest,  P\'{a}zm\'{a}ny P\'{e}ter s\'{e}t\'{a}ny 1/c}
\email{bodorb@cs.elte.hu}
\author[K. Kalina]{Kende Kalina}
\email{kkalina@cs.elte.hu}
\author[Cs. Szab\'{o}]{Csaba Szab\'{o}}
\email{csaba@cs.elte.hu}
\newcommand{\mysize}{\normalsize}
\newtheorem{theorem}{Theorem}
\newtheorem{definition}[theorem]{Definition}
\newtheorem{lemma}[theorem]{Lemma}
\newtheorem{corollary}[theorem]{Corollary}
\newtheorem{problem}{Problem}
\DeclareMathOperator{\Aut}{Aut}
    \newcounter{csopk}
    \newenvironment{csop}[1]{\refstepcounter{csopk}\par\medskip\noindent%
       \textbf{Group \arabic{csopk}: {#1}} \par\addvspace{\baselineskip} \rmfamily}{\medskip}
\begin{document}
\newcommand{\nc}{\newcommand}
\nc{\ok}{$\omega$-categorical}

\nc{\ff}{\mathcal{F}}
\nc{\ee}{\alpha}
\nc{\cc}{\mathcal{C}}
\nc{\rr}{\mathcal{R}}
\nc{\on}{\operatorname}
\nc{\aut}[1]{\on{Aut}\left({#1}\right)}
\nc{\zj}[1]{\left({#1}\right)}
\nc{\ba}{\mathfrak{Ba}}
\nc{\sym}{\on{Sym}(\ba)}
\nc{\aba}{\Aut(\ba)}
\nc{\eps}{\vec{\varepsilon}}
\nc{\al}{\alpha}
\nc{\thi}{\tilde{\phi}}
\nc{\nb}{\nsubseteq}

\nc{\mxi}{\mathfrak{I}}
\nc{\theo}{\mathcal{T}}
\nc{\simi}{\on{Sym} \left( \mxi \right)}
\nc{\simik}{\on{Sym}_{ \{ \mxi \} } \left( \ba \right)}

\nc{\ga}{\mathfrak{A}}
\nc{\gb}{\mathfrak{B}}
\nc{\mbn}{\mathbb{N}}
\nc{\cl}[1]{\operatorname{Cl} \left< {#1} \right>}

\renewcommand{\phi}{\varphi}

\nc{\Aref}{\ref}
\nc{\aref}{\ref}
\begin{abstract}

For an algebra $\mathfrak{ A}=(A,f_1,\dots, f_n)$ the algebra  $\mathfrak{B}=(A,t_1, \ldots, t_k)$ is called a functional reduct if each $t_j$ is a term function of $\mathfrak{A}$. We classify the \emph{functional reducts} of the countable atomless Boolean algebra up to first-order interdefinability. That is, we consider two functional reducts the ``same", if their group of automorphisms is the same.  We show that there are 13 such reducts and describe their structures and group of automorphisms.
\end{abstract}

\maketitle

\section{Introduction}

Let  $\ba=(B,\wedge,\vee,0,1,\neg)$ denote the countable atomless Boolean algebra. It is known that this structure is
unique up to isomorphism. It is easy to check that the structure $\ba$ is \emph{homogeneous}, that is every isomorphism between finite substructures of $\ba$ can be extended
to an automorphism of $\ba$. 
The structure $\ba$ is also \emph{{\ok}} which means that every countable structure with the same first-order theory is isomorphic to $\ba$.
The Boolean algebra $\ba$ can be obtained as the Fra\"iss\'{e} limit of the class of finite Boolean algebras.
Homogeneous structures have been extensively studied and they are often interesting on their own. Some of the classical examples for homogeneous structures are the random graph, the countably infinite dimensional vector spaces over finite fields, and the rationals as an ordered set. 
	The general theory of homogeneous structures was initiated by Fra\"iss\'{e} \cite{frass}. 
 We refer the reader to \cite{macpherson} for a detailed discussion about homogeneous structures.

\begin{definition}
Let $\mathfrak{ A}=(A,f_1,\dots, f_n)$ be an algebra on the set $A$ with operations $f_1, \ldots, f_n$. A \emph{functional reduct} of
$\mathfrak{A}$ is a structure $\mathfrak{B}=(A,t_1, \ldots, t_k)$ on the same set $A$ and with operations $t_1, \ldots, t_k$
such that every $t_j$ is a term function of $\mathfrak{A}$ (see Definition \ref{def_term}).
\end{definition}

For example the structure $\mathfrak{A}=(\ba, \Delta)$ where $\Delta$ denotes the symmetric difference, $\Delta(x,y)= \neg( x\wedge y)\ \wedge (x\vee y) $ is a functional reduct
of $\ba$ which is an Abelian group. In this paper we will classify the functional reducts of $\ba$.
We will show that there are exactly $13$ of them up to first-order interdefinability. The example given above is denoted by $(+_0,0)$ later in the paper.


If $\mathfrak{A}$ is a first-order structure then we will call a structure $\mathfrak{B}$ a \emph{reduct} of $\mathfrak{A}$ if $\mathfrak{B}$ has the same domain set as $\mathfrak{A}$, and all constants, relations, and functions of $\mathfrak{B}$ are first-order definable over $\mathfrak{A}$.
For a given first-order structure
we can define a preorder on its reducts: $\mathfrak{B}_1 \lesssim \mathfrak{B}_2$ if and only if $\mathfrak{B}_2$ is a reduct of $\mathfrak{B}_1$. Two reducts are called first-order interdefinable if and only if $\mathfrak{B}_1 \lesssim \mathfrak{B}_2$ and $\mathfrak{B}_2 \lesssim \mathfrak{B}_1$.
It is an important consequence of Theorem \ref{ryll} that a reduct of an {\ok} structure is also {\ok} itself.


The automorphism group of a reduct consists of the permutations preserving all relations of the reduct, so the automorphism group
of a reduct contains the automorphism group of the original structure. In general, if $\mathfrak{B}_1 \lesssim \mathfrak{B}_2$ then
$\Aut(\mathfrak{B}_1) \leq \Aut(\mathfrak{B}_2)$. Moreover, 
the group $\Aut(\mathfrak{B})$ is closed in the topology of pointwise convergence, i.e.\ the subspace topology of the product topology on $A^A$ where 
$A$ is equipped with the discrete topology.


If the structure $\mathfrak{A}$ is {\ok} then there is a bijection between the closed subgroups of $\on{Sym}(A)$
containing $\Aut(\mathfrak{A})$ and the equivalence classes of reducts of $\mathfrak{A}$ by first-order interdefinability.
So in the case of {\ok} structures the classification of the reducts is equivalent with the description of the closed groups
containing the automorphism group. 
language. 


The classification of the reducts up to first-order interdefinability is known for a handful of {\ok} homogeneous structures.
For example the set of rationals equipped with the usual ordering has five reducts up to first-order interdefinability
\cite{camren}, and similarly the the random graph \cite{thomas}, the random tournament \cite{benn} and the random partially ordered set
\cite{pppsz} also have five. The classification is also known for the Henson graphs \cite{pongi}.
The homogeneous ordered graph has more than $40$ \cite{nk}, and $\mathbb{Q}^{\leq}$ (rationals with the usual ordering) equipped with an additional constant has
$116$ 
reducts \cite{juzi}. 
	Note that in all these classifications the languages of the structures in question do not contain any function symbols.


In order to handle the reducts of these structures Bodirsky and Pinsker developed some techniques using Ramsey theory and canonical functions (see \cite{rams} for instance). However, these techniques only work for homogeneous structures over finite relational languages. It follows from an easy orbit counting argument that the structure $\ba$ is not homogeneous over any finite relational language: $\Aut(\ba)$ has at least $2^{2^{n-1}}$ $n$-orbits, whereas the automorphism group of a homogeneous structure over a finite relational language has at most $2^{p(n)}$ $n$-orbits for some polynomial $p$. Therefore the methods mentioned above cannot be applied in our case.

	In this paper we show that $\ba$ has exactly 13 functional reducts. 11 out of these 13 reducts are reducts of $\ba$ considered as a vector space together with the constant 1 (later denoted by $(+_0,0,1)$). Apart from these there is one nontrivial reduct. This reduct can be represented as $\ba$ with the median relation $M(x,y,z)=(x\vee y)\wedge (y\vee z)\wedge (z\vee x)$. The automorphism group of this reduct is generated by the translations $t_c: x\mapsto x+c$ over $\Aut(\ba)$.

	We also give a list of all reducts known to us at the end of this article, the lattice of these reducts can be seen on Figure \ref{figure:all}. This list contains 12 infinite ascending chains, and 23 sporadic reducts. Note that our list might be incomplete, 
	but we hope that this classification of functional reducts can be used for the classification of all reducts in the future. 


\section{Background}

We can define a preorder similarly on the functional reducts of a given algebra: $\cc_1 \lesssim \cc_2$ if and only if all functions in $\cc_2$ can be defined over $\cc_1$
using first-order formulas. The automorphism groups of such functional reducts will also be closed. It remains true that two functional reduct is first-order
interdefinable if and only if their automorphism groups are the same. But the first-order equivalence classes of functional reducts are not in a bijection
with the closed groups: there may exist closed groups containing the automorphism group of the original structure which cannot be obtained as the automorphism
group of a functional reduct.


\begin{definition}\label{def_term}
	Let $\mathfrak{A}=(A,f_1,\dots,f_n)$ be an algebra. Then we say that a function $g: A^k\rightarrow A$ is \emph{first-order definable} over $\mathfrak{A}$ if the relation $R:=\{(x,g(x)): x\in A^k\}$ is definable by a first-order formula using the functions $f_1,\dots,f_n$. 

	We say that a function $g$ is a \emph{term function} of the algebra $\mathfrak{A}$ if $g$ can be generated as a function from the functions $f_1,\dots,f_n$, and the projection maps $\pi_i^k: (x_1,\dots,x_k)\mapsto x_i$. 
\end{definition}

Note that not every first-order definable function is a term function: if we consider any lattice as a $\wedge$-semilattice, then the operation $\vee$ is first-order definable (see Lemma \ref{ketvalt}), but it is not a term function in the $\wedge$-semilattice.

	Let $\mathfrak{B}=(B,\wedge,\vee,0,1,\neg)$ be an arbitrary Boolean algebra. Then we define the operations $+$ and $\cdot$ on $\mathfrak{B}$ as $x+y:=\Delta(x,y)=\neg (x\wedge y)\wedge (x\vee y)$, and $x\cdot y=xy=x\wedge y$. It is well-known that in this case the structure $\mathfrak{B}$ with the operations $+$ and $\cdot$, and the constants 0 and 1 form a ring in a natural way. It is also known that this ring satisfies the identities $x^2=x$ and $2x=0$. Using this definition the algebras $\mathfrak{B}=(B,\wedge,\vee,0,1,\neg)$ and $\mathfrak{B}'=(B,+,\cdot,0,1)$ are term equivalent meaning that a function is a term function over $\mathfrak{B}$ if and only if it is a term function over $\mathfrak{B}'$. In order to see this it is enough to show that every function of $\mathfrak{B}'$ is term function of $\mathfrak{B}$, and vice versa. The former follows directly from the definition of $\mathfrak{B}'$. For the other direction we can use the following definitions: $$x\wedge y:=xy,\, x\vee y:=xy+x+y,\, \neg(x):=x+1.$$ This observation also implies that the structures $\mathfrak{B}$ and $\mathfrak{B}'$ are interdefinable.

	In most of our computations in this paper we will consider $\ba$ as a ring as above. The argument above shows this does not change what is a term function of $\ba$ or what is first-order definable over $\ba$.


The following theorem is essential in the theory of {\ok} structures:

\begin{theorem}[Engeler, Ryll-Nardzewski, Svenonius]\label{ryll}
The structure $\ga$ is {\ok} if and only if $\Aut(\ga)$ is \emph{oligomorphic}, i.e. for all $n$ it has finitely many $n$-orbits.
\end{theorem}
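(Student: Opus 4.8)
The plan is to prove the two implications separately, using the space $S_n(\operatorname{Th}(\ga))$ of complete $n$-types of the theory $T=\operatorname{Th}(\ga)$ as an intermediary, and exploiting the elementary fact that two $n$-tuples lying in the same orbit of $\aut{\ga}$ necessarily realise the same type over $T$.

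For the implication that oligomorphicity forces $\omega$-categoricity, I would first note that since every orbit determines a single type, having finitely many $n$-orbits gives finitely many realised $n$-types. The key step is to show that each orbit is first-order definable: because there are only finitely many orbits on $A^n$, any two distinct orbits carry distinct types and can therefore be separated by a formula, so each orbit is a finite Boolean combination of such separating formulas and is thus definable. The defining formula of an orbit isolates the corresponding type, and since the finitely many orbit-formulas cover $A^n$ and are pairwise exclusive, every consistent type in $S_n(T)$ must contain one of them and hence coincides with a realised (isolated) type; this shows $S_n(T)$ is finite and every type is principal. I would finish with a back-and-forth construction: given any other countable model $\gb\models T$, partial isomorphisms are built one element at a time, using at each stage that the isolating formula of the type of the enlarged tuple is realised in $\gb$ (by completeness of $T$) to supply the required witness, yielding $\ga\cong\gb$.

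For the converse, that $\omega$-categoricity implies oligomorphicity, I would argue contrapositively through type spaces. If some $S_n(T)$ were infinite, then being an infinite closed subset of a Stone space it would contain a non-isolated point $p$; the Omitting Types Theorem then produces a countable model of $T$ omitting $p$, while compactness produces a countable model realising $p$, and these two models cannot be isomorphic, contradicting $\omega$-categoricity. Hence each $S_n(T)$ is finite. To convert finitely many types into finitely many orbits, I would show that the unique countable model is $\omega$-homogeneous: with only finitely many types, all of them isolated, the same back-and-forth argument extends any type-preserving finite partial map to an automorphism, so two tuples with the same type lie in the same orbit. The number of orbits is then bounded by the number of types and is finite.

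The main obstacle I anticipate is the converse direction, specifically the passage from \emph{same type} to \emph{same orbit}: this is exactly where homogeneity of the countable model is required, and establishing it leans on the full back-and-forth machinery together with the finiteness and isolation of types secured in the first part. The Stone-space compactness argument and the invocation of the Omitting Types Theorem are the two external inputs I would rely on most heavily.
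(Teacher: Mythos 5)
The paper does not actually prove this theorem: it is quoted as a classical result of Engeler, Ryll-Nardzewski and Svenonius and used as a black box (only its two corollaries are applied later), so there is no internal proof to compare yours against. What you propose is the standard type-space proof from the textbooks, and its overall architecture --- orbits refine types; finitely many realized types can be pairwise separated by formulas and hence isolated; Omitting Types plus compactness and L\"owenheim--Skolem for the converse; back-and-forth on countable atomic models to pass from equality of types back to conjugacy under automorphisms --- is correct and is essentially how the result is usually established.

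One step in your forward direction is circular as written. You assert that ``any two distinct orbits carry distinct types'' in order to conclude that each orbit is first-order definable. At that stage of the argument this is not available: the equivalence ``same type if and only if same orbit'' is precisely (half of) what the theorem delivers for $\omega$-categorical structures, and a priori an oligomorphic structure could have two tuples of the same type lying in different orbits --- ruling this out requires the homogeneity that you only establish later, via the back-and-forth machinery. (In general countable structures the implication genuinely fails: in the linear order $\mathbb{Z}+\mathbb{Z}$ all elements realize the same $1$-type but form two orbits.) Fortunately the step is also unnecessary: all your argument needs is that the number of \emph{realized} $n$-types is finite, which already follows from the uncontroversial direction ``same orbit implies same type''. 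Run the separation argument on the finitely many realized types $p_1,\dots,p_k$ rather than on orbits: choose $\varphi_{ij}\in p_i\setminus p_j$ for $i\neq j$, set $\psi_i=\bigwedge_{j\neq i}\varphi_{ij}$; then $\psi_i$ isolates $p_i$, the formulas $\psi_1,\dots,\psi_k$ cover all $n$-tuples, and hence every type in $S_n(T)$ contains some $\psi_i$ and equals $p_i$. With that substitution the rest of your proof, including the entire converse direction, goes through unchanged.
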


The following two corollaries are easy consequences of Theorem \ref{ryll}.

\begin{corollary}\label{omkat}
Let $\ga$ be an {\ok} structure and $f$ be a function or a relation on the universe of $\ga$ such that every permutation of $\Aut(\ga)$ preserves $f$.
Then $f$ is first-order definable over $\ga$.
\end{corollary}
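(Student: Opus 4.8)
The plan is to derive Corollary~\ref{omkat} directly from Theorem~\ref{ryll} by exploiting oligomorphicity. Suppose $R$ is an $n$-ary relation on the universe of $\ga$ that is preserved by every permutation in $\aut{\ga}$. The first step is to observe that $\aut{\ga}$ acts on the set $A^n$ of $n$-tuples, and that the assumption ``every automorphism preserves $R$'' means precisely that $R$ is a union of orbits of this action. Concretely, if $\bar a=(a_1,\dots,a_n)\in R$ and $g\in\aut{\ga}$, then $g(\bar a)=(g(a_1),\dots,g(a_n))\in R$ as well, so the whole orbit of $\bar a$ lies inside $R$; taking the union over all $\bar a\in R$ shows $R$ is exactly a union of $n$-orbits.

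The second step is to invoke Theorem~\ref{ryll}: since $\ga$ is {\ok}, $\aut{\ga}$ is oligomorphic, so there are only finitely many $n$-orbits. Hence $R$ is a union of \emph{finitely many} orbits $O_1,\dots,O_m$. This finiteness is the crucial leverage the theorem provides, and it is what lets us pass from a purely set-theoretic description of $R$ to a first order formula.

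The third step is to show that each individual orbit $O_i$ is first order definable over $\ga$, from which the result follows by taking the (finite) disjunction $\bigvee_{i=1}^m \varphi_{O_i}$ as the defining formula for $R$. To define a single orbit, I would again use the Ryll-Nardzewski machinery: in an {\ok} structure two $n$-tuples lie in the same $\aut{\ga}$-orbit if and only if they satisfy exactly the same first order formulas (i.e.\ they have the same complete $n$-type), and by {\ok}-ity each complete $n$-type is isolated by a single formula. Picking a representative $\bar b\in O_i$ and letting $\varphi_{O_i}(\bar x)$ be the formula isolating the type of $\bar b$ yields a formula whose solution set in $\ga$ is precisely $O_i$.

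The main obstacle is the third step, namely justifying that orbits coincide with definable type-classes. This is really the heart of the Ryll-Nardzewski correspondence rather than a trivial consequence, so one must be careful to cite or re-derive the fact that orbit-equivalence equals type-equivalence and that types are isolated; everything else (union of orbits, finiteness, finite disjunction) is routine bookkeeping once this correspondence is in hand.
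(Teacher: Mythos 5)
Your proof is correct, and it is precisely the standard derivation from Theorem \ref{ryll} that the paper has in mind: the paper states this corollary without any written proof, merely calling it an easy consequence of that theorem. Your decomposition — $R$ is a union of orbits, oligomorphicity makes this a finite union, and each orbit is defined by the isolating formula of the type of a representative (your correctly flagged third step, where the equivalence of orbit-equivalence and type-equivalence is exactly the point at which $\omega$-categoricity and countability are used) — is the intended argument, so nothing is missing.
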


\begin{corollary}\label{omkat2}
Let $\cc,\cc_1,\cc_2,\ldots, \cc_k$ be some functional reducts of an algebra $\mathfrak{A}$. Then the following two conditions are equivalent:
\begin{itemize}
\item $\Aut(\cc_1) \cap \Aut(\cc_2) \cap \ldots \cap \Aut(\cc_k)=\Aut(\cc)$
\item All functions of $\cc_1 \cup \cc_2 \cup \ldots \cup \cc_k$ can be defined from $\cc$, and
 all functions of $\cc$ can be defined from $\cc_1 \cup \cc_2 \ldots \cup \cc_k$. 
\end{itemize}
\end{corollary}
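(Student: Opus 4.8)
The plan is to derive both implications directly from Corollary \ref{omkat}, using the standard device of identifying a function with the relation given by its graph. The starting observation is that every functional reduct occurring in the statement is \ok: each of $\cc, \cc_1, \ldots, \cc_k$ is a functional reduct of $\ba$, so its automorphism group contains $\aut{\ba}$ and is therefore oligomorphic, whence Theorem \ref{ryll} applies. I will write $\cc^*$ for the structure on the same universe whose operations are exactly the operations of all the $\cc_i$ taken together. Since a permutation is an automorphism of $\cc^*$ precisely when it preserves each operation of each $\cc_i$, we have $\aut{\cc^*} = \aut{\cc_1} \cap \ldots \cap \aut{\cc_k}$, and $\cc^*$ is again \ok. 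Thus the first condition says exactly $\aut{\cc^*} = \aut{\cc}$, and the second condition says that $\cc$ and $\cc^*$ are first order interdefinable as functional reducts.

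For the implication from the second condition to the first I would only use the elementary fact recorded in the introduction, that first order definability cannot enlarge the class of permutations one must preserve. If every operation of $\cc^*$ is first order definable from $\cc$, then every permutation in $\aut{\cc}$ preserves all these operations, giving $\aut{\cc} \leq \aut{\cc^*}$; symmetrically, definability of every operation of $\cc$ from $\cc^*$ gives $\aut{\cc^*} \leq \aut{\cc}$. Combining the two inclusions yields $\aut{\cc^*} = \aut{\cc}$, which is the first condition.

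The converse is where Corollary \ref{omkat} does the work. Assume $\aut{\cc^*} = \aut{\cc}$. To define the operations of $\cc^*$ from $\cc$, fix an $n$-ary operation $f$ occurring in some $\cc_i$ and consider its graph $R_f = \{ (a_1, \ldots, a_n, b) : f(a_1, \ldots, a_n) = b \}$, an $(n+1)$-ary relation. Every permutation in $\aut{\cc} = \aut{\cc^*} \leq \aut{\cc_i}$ preserves $f$, hence preserves $R_f$. Applying Corollary \ref{omkat} to the \ok structure $\cc$ and the relation $R_f$ shows that $R_f$ is first order definable over $\cc$, and a defining formula $\psi(x_1, \ldots, x_n, y)$ for $R_f$ is precisely a first order definition of the operation $f$. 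Running the same argument with the roles of $\cc$ and $\cc^*$ exchanged — now using $\aut{\cc^*} = \aut{\cc}$ to see that each automorphism of $\cc^*$ preserves the graph of any operation of $\cc$ — shows that every operation of $\cc$ is first order definable from $\cc^*$, i.e. from $\cc_1 \cup \ldots \cup \cc_k$. This gives the second condition.

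The only point requiring care, which I regard as the crux rather than a genuine obstacle, is the passage between an operation and its graph: one must verify that a permutation preserves the operation $f$ exactly when it preserves the relation $R_f$, so that the automorphism groups are unchanged when each function symbol is replaced by the relation symbol for its graph, and that a first order definition of $R_f$ legitimately counts as a first order definition of $f$. Once this translation is in place, the \ok-ity of $\cc$ and of $\cc^*$ established at the outset allows Corollary \ref{omkat} to be invoked on each graph, and everything else is the bookkeeping above.
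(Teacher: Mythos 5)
Your proof is correct and coincides with the paper's intended argument: the paper gives no explicit proof of this corollary, stating only that it is an easy consequence of Theorem \ref{ryll}, and your route---forming the combined structure, using monotonicity of automorphism groups under first order definability for one implication, and applying Corollary \ref{omkat} to the graphs of the operations for the converse---is exactly the standard way to fill in that assertion. Your explicit verification that preserving an operation is the same as preserving its graph, and that $\omega$-categoricity passes to the reducts because their automorphism groups contain the oligomorphic group $\aut{\ba}$, supplies precisely the details the paper leaves implicit.
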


	In this paper we will classify the functional reducts of the countable atomless Boolean algebra $\ba$ up to first-order interdefinability. Since
$\ba$ is {\ok}, it is sufficient to describe the closed groups $\aba \leq G \leq \sym$ which can be obtained as the automorphism group of a functional reduct.
	
	We use the following classical results in universal algebra.

\begin{theorem}\label{identities}
	Let $\mathfrak{A}$ be an algebra in which some identity $f_1(x_1,\dots,x_k)=f_2(x_1,\dots,x_k)$ is satisfied for some term functions $f_1$ and $f_2$ of $\mathfrak{A}$. Let $\mathfrak{B}$ be an algebra in the variety generated by $\mathfrak{A}$, and let $f_i^{\mathfrak{B}}$ denote the term function of $\mathfrak{B}$ which has the same definition as $f_i$ in $\mathfrak{A}$. Then the identity $f_1^{\mathfrak{B}}(x_1,\dots,x_k)=f_2^{\mathfrak{B}}(x_1,\dots,x_k)$ is satisfied in $\mathfrak{B}$.
\end{theorem}

	The subalgebra generated by the element $0,1\in\ba$ is isomorphic to the two-element Boolean algebra. We denote this subalgebra by $\mathfrak{B}_2$.

\begin{theorem}\label{boolean_variety}
	 The variety generated by $\mathfrak{B}_2$ is the class of all Boolean algebras. In particular the algebra $\ba$ itself is contained in this variety.
\end{theorem}

	We refer the reader to \cite{burris} for the proofs of theorems above, as well as a more detailed discussion about universal algebra, and varieties.

	Theorems \ref{identities} and \ref{boolean_variety} have the following consequence for term functions of $\ba$.

\begin{lemma}\label{rest}
	Let $f_1,f_2$ be term functions of $\ba$ satisfying the identity $f_1=f_2$ on $\mathfrak{B}_2$. Then $f_1=f_2$.
\end{lemma}

\begin{proof}
	Since $\mathfrak{B}_2$ is a subalgebra of $\ba$ it follows that $f_i=(f_i|_{\mathfrak{B}_2})^{\ba}$. Then Theorems \ref{identities} and \ref{boolean_variety} imply that the identity $f_1=f_2$ is satisfied in $\ba$.
\end{proof}

	Finally we provide some justification why this restricted set of reducts might be interesting for the structure $\ba$.

\begin{definition}
Let $\mathfrak{A}$ be a structure on the set $A$, let $\Aut(\mathfrak{A})$ denote its automorphism group and let $S \subset A$ be a finite subset. The \emph{group theoretic algebraic closure} of $S$ is:
$$
\operatorname{ACL}(S)=\{x \in A : \mbox{ the orbit of } x \mbox{ under the stabilizer of } {S} \mbox{ is finite}\}
$$
\end{definition}
We follow the notation of \cite[notation introduced before Lemma 4.1.1]{hodges}. It is known that if $\mathfrak{A}$ is {\ok} then for finite subsets the above definition coincides with the usual model-theoretic algebraic closure \cite{hodges}. 

	We observed that every reduct of $\ba$ that we know is similar to one of the functional reducts in the following sense: for every reduct $\rr$ there is a unique functional reduct $\ff$ such that the algebraic closure operator on $\rr$ and on $\ff$ coincides.

\section{The non-linear functional reducts}



In this section we will deal with the non-linear functional reducts. To do so, we determine all closed subgroups of $\sym$ containing $\aba$ which preserve
some non-linear term function of $\ba$.


	We start with the following important observation.

\begin{lemma}\label{metszet_eleg}
	The structure $\ba=(B,\wedge,\vee,0,1,\neg)$ is first-order interdefinable with the structures $(B,\wedge)$ and $(B,\vee)$.
\end{lemma}

\begin{proof}
	We only show that $\ba=(B,\wedge,\vee,0,1,\neg)$ and $(B,\wedge)$ interdefinable, the other claim is analogous. For this it is enough to show that the functions $\vee,\neg$ and the constants $0,1$ have are all first-order definable in $(B,\wedge)$. 

	Using the usual notation in lattice theory we know that $x\leq y$ if and only if $x\wedge y=x$, and $z=x\vee y$ if and only if $z$ is the smallest element with $x\leq z$ and $y\leq z$. Then we have $z=x\vee y$ if and only if $$xz=x \wedge yz=y \wedge \forall w\bigl((xw=x \wedge yw=y)\rightarrow zw=z\bigr).$$ Therefore the relation $\vee$ is first-order definable in $(B,\wedge)$. Then we can define $0,1$ as $x=0\Leftrightarrow \forall z(zx=x)$ and $x=1\Leftrightarrow \forall z(zx=z)$. Finally, we can define $\neg$ as $x=\neg (y)\Leftrightarrow (xy=0) \wedge (x\vee y=1)$.
\end{proof}


Let $f(x,y)$ be a binary term function of $\ba$. Then $f(x,y)$ can be expressed as the sum of some monomials from the set $\{1,x,y,xy\}$.

\begin{lemma}\label{ketvalt}
If $G$ is a closed group such that $\aba \leq G \leq \sym$ and $G$ preserves a non-linear binary term function $f(x,y)$ then $G=\aba$.
\end{lemma}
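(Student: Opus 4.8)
The plan is to establish the stronger fact that \emph{every} permutation preserving $f$ already lies in $\aba$, i.e.\ $\aut{A,f}=\aba$. Since by hypothesis $\aba\le G$ and $G$ preserves $f$, this sandwiches $G$ between $\aba$ and $\aut{A,f}=\aba$ and forces $G=\aba$. In particular the argument will use only the inclusion $\aba\le G$; neither the closedness of $G$ nor Corollary \ref{omkat2} is needed, and the inclusion $\aba\le\aut{A,f}$ is automatic because Boolean-algebra automorphisms preserve every term function.

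First I would record two elementary closure facts. If a permutation $g$ preserves $f$, then by induction on term complexity $g$ preserves every function obtained by composing $f$ with itself and with projections; in particular $g$ preserves the unary diagonal $d(x)=f(x,x)$, so whenever $d$ is a constant function with value $c$ we obtain $g(c)=c$. Hence from $f$ I may freely use any derived term function together with any constant that arises as such a diagonal value.

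The structural heart of the argument is the claim that from any nonlinear $f$ one can define the Boolean order $\leq$ using these derived terms and constants, so that $g$ preserves $\leq$. Writing $f=xy+\ell(x,y)$ with $\ell$ affine over $\mathbb{F}_2$, there are eight nonlinear choices of $\ell$, which I would split into the familiar Boolean functions: $\wedge$, $\vee$, the two half-differences $xy+x$ and $xy+y$, the two implications $xy+x+1$ and $xy+y+1$, and finally $\mathrm{NAND}=xy+1$ and $\mathrm{NOR}=xy+x+y+1$. For $\wedge$ and $\vee$ the order is immediate, since $x\leq y\iff x\wedge y=x\iff x\vee y=y$. For $xy+x$ the diagonal is the constant $0$ and $f(x,f(x,y))=xy$ recovers $\wedge$; the case $xy+y$ is symmetric. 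For the two implications the diagonal is the constant $1$, and $x\leq y\iff f(x,y)=1$ defines the order directly (here one uses $g(1)=1$). For $\mathrm{NAND}$ (resp.\ $\mathrm{NOR}$) the diagonal gives $\neg$, after which $\wedge$ (resp.\ $\vee$) is a composition, returning us to an earlier case. In each case $g$ preserves the defining term and fixes the needed diagonal constant, so the biconditional $x\leq y\iff g(x)\leq g(y)$ holds and $g$ is an order-automorphism of $\ba$.

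I would then close with the base fact that the order-automorphisms of $\ba$ are exactly $\aba$: an order-automorphism preserves least upper and greatest lower bounds, hence $0,1,\wedge,\vee$, and since $\neg x$ is the unique $y$ with $x\wedge y=0$ and $x\vee y=1$, it preserves $\neg$ as well and is therefore a Boolean-algebra automorphism. Combining the three steps, any $g$ preserving a nonlinear $f$ lies in $\aba$, giving $\aut{A,f}=\aba$ and hence $G=\aba$. I expect the main obstacle to be the bookkeeping in the middle step: verifying that each of the eight functions really does yield the order relation after passing to diagonals, and checking that the constants needed to define $\leq$ are themselves fixed by $g$. The concluding order-automorphism fact is standard and short.
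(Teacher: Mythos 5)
Your proof is correct, and its skeleton is the same as the paper's: enumerate the eight non-linear binary term functions and reduce each one, by self-composition and diagonals, to a standard Boolean operation — several of your reductions, e.g.\ $f(x,f(x,y))=xy$ for $xy+x$ and the diagonal $f(x,x)=\neg x$ for $xy+1$ and $xy+x+y+1$, are exactly the ones used there. The genuine difference is the closing step. The paper reduces every case to the meet $xy$ or the join $xy+x+y$ and then finishes model-theoretically: every term function of $\ba$ is first order definable from $\wedge$ alone (or from $\vee$ alone), so a permutation preserving meet or join preserves all of them and is an automorphism. You instead reduce every case to the order relation, treating the implication terms $xy+x+1$ and $xy+y+1$ by the shortcut $x\le y \Leftrightarrow f(x,y)=1$ (using $g(1)=1$) rather than composing them to obtain $\vee$ as the paper does, and you finish with the elementary lattice-theoretic fact that an order-automorphism of $\ba$ preserves greatest lower bounds, least upper bounds, $0$, $1$, and therefore complements, hence lies in $\aba$. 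What your route buys is self-containedness: it avoids the definability/preservation machinery entirely and makes explicit two points the paper leaves tacit, namely that diagonal constants are fixed by $g$ and that $g^{-1}$ also preserves $f$, which is what legitimizes the biconditional definitions of $\le$. What the paper's route buys is brevity at the finish, at the cost of invoking the general preservation-of-definable-relations fact. Neither proof uses the closedness hypothesis on $G$, as you correctly observe.
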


\begin{proof}
It follows from Lemma \ref{metszet_eleg} and Corollary \ref{omkat2} that if a permutation $\phi$ preserves $xy=x\wedge y$ or $xy+x+y=x \vee y$ then $\phi$ must be an automorphism. 


\begin{itemize}
\item Let  $f(x,y)=xy+x$ then $f(x,f(x,y))=x(xy+x)+x=xy$. So every permutation which preserves $f$ must be an automorphism. The same can be
said for the operation $xy+y$.
\item Let  $g(x,y)=xy+x+1$ then $g(g(x,y),y)=(xy+x+1)y+(xy+x+1)+1=xy+x+y$. So every permutation which preserves $g$ must be an automorphism. The same can be
said for the operation $xy+y+1$.
\item Let $h(x,y)=xy+1$ then $h(h(x,x),h(y,y))=(xx+1)(yy+1)+1=xy+x+y$. So every permutation which preserves $h$ must be an automorphism.
\item Let $k(x,y)=xy+x+y+1$ then $k(k(x,x),k(y,y))=(x+1)(y+1)+(x+1)+(y+1)+1=xy$. So every permutation which preserves $k$ must be an automorphism.
\end{itemize}
We have checked all possible non-linear binary term functions, and thus the lemma is proved.
\end{proof}


If $f$ is an arbitrary term function of arity $k$ then $f$ can be written as
$$f(x_1,\dots,x_k)=\sum_{\eps\in \{0,1\}^k}{\ee_{\eps}\prod_{i=1}^k x_i^{\varepsilon_i}}$$
where all coefficients $\ee_{\eps}$ are $0$ or $1$. If $\eps$ is a $0-1$ vector of length $k$ then $\sum_{i=1}^k{\varepsilon_i}$ will be denoted by 
$|\eps|$. 


We will denote the $(k-1)$-ary function $f(x_1, \ldots, x_{j-1}, x_i, x_{j+1}, \ldots, x_k)$ by $f_{x_i x_j}(x_1,\ldots, x_{k-1})$. By an abuse of
notation we will write $f_{ij}$ instead of $f_{x_i x_j}$ if the meaning is clear from the context. It is clear that from this definition that if a permutation $\phi$ preserves $f$, then it also preserves $f_{ij}$.

	We will need the following definition:

\begin{definition}
Let $c$ be an arbitrary element of $\ba$. Then the translation by $c$ is the permutation $t_c(a)=a+c$. The set of all translations will be denoted by $T$.
$$T=\left\{ t_c : c \in \ba\right\}$$
\end{definition}


We will now characterize the possible automorphism groups corresponding to ternary non-linear term functions.

\begin{lemma}\label{eltolasok}
	Let $f$ be a ternary non-linear term function expressed as 
 $$f(x,y,z)=xy+yz+zx + \al_3 x + \al_2 y + \al_1 z + \al_0$$ where there are exactly zero or two $1$s among $\al_1 ,\al_2$ and $\al_3$. Then a permutation $\phi$ preserves $f$ if and only if $\phi$ can be written as $\phi= t \circ \psi$ where $t$ is a translation and $\psi$ is an automorphism of $\ba$.
\end{lemma}

\begin{proof}
Since $f$ is a term function over $\ba$, it follows that every automorphism of $\ba$ also preserves $f$. Now we show that $f$ is also preserves by every translation. Let $t=t_c$. Then we have

\begin{multline*}
f(t(x),t(y),t(z))=f(x+c,y+c,z+c)=\\=xy +yz + zx +c +\al_3 x +\al_3 c+ \al_2 y + \al_2 c+ \al_1 z + \al_1 c+ \al_0 =\\= f(x,y,z) + (\al_1 + \al_2 +\al_3 +1)c=f(x,y,z)+c=t(f(x,y,z)).
\end{multline*}

	For the other direction let $\phi$ be an arbitrary permutation which preserves $f$. Denote the permutation $t_{\phi(0)} \circ \phi$ by $\thi$ (then $\thi (x)=\phi(x) + \phi(0)$). We will show that this permutation is an automorphism: $\thi \in \aba$. Since $\thi$ preserves $f$ and $\thi(0)=0$, the permutation $\thi$ must preserve
the function $g(x,y)=f(x,y,0)$. This $g$ is a binary non-linear term function, so $\thi$ is an automorphism by Lemma \ref{ketvalt}. The decomposition
of $\phi$ as $t_{\phi(0)}^{-1} \circ \thi = \phi$ is of the required form.
\end{proof}

\begin{lemma}\label{haromvalt}
Let $f$ be a ternary non-linear term function expressed as 
 $$f(x,y,z)=\al_7 xyz+ \al_6 xy +\al_5 yz + \al_4 zx + \al_3 x + \al_2 y + \al_1 z + \al_0$$
where  at least one of the coefficients $\al_7,\al_6,\al_5$ and $\al_4$ is equal to $1$. Let $\phi$ be a permutation which
preserves $f$. Then one of the following two possibilities holds for $\phi$:
\begin{itemize}
\item $\phi$ is an automorphism: $\phi \in \aba$.
\item $\phi$ can be obtained as the composition of an automorphism and a non-identical translation.
\end{itemize}
\end{lemma}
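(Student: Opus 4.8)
The plan is to reduce $f$ to binary functions by identifying variables and then invoke Lemma \ref{ketvalt}. First observe that if $\phi$ preserves $f(x,y,z)$, then substituting equal arguments shows it also preserves each of the three binary functions $f(x,x,z)$, $f(x,y,x)$, $f(x,y,y)$ obtained by identifying two variables. Using $x^2=x$, a direct expansion shows that the coefficients of the quadratic monomials in these functions are $\al_7+\al_5+\al_4$, $\al_7+\al_6+\al_5$ and $\al_7+\al_6+\al_4$ respectively. If any one of these equals $1$, the corresponding identification is a non-linear binary term function preserved by $\phi$; applying Lemma \ref{ketvalt} to the closed group generated by $\aba$ and $\phi$ (preservation of a fixed function is inherited by this group) yields $\phi\in\aba$, the first alternative.

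It remains to treat the case where all three quadratic coefficients vanish. Solving this system over $\mathbb{F}_2$ forces $\al_7=0$ and $\al_4=\al_5=\al_6=1$, so, since at least one of $\al_7,\al_6,\al_5,\al_4$ equals $1$, the function must be $f(x,y,z)=xy+yz+zx+\al_3 x+\al_2 y+\al_1 z+\al_0$. Writing $m(x,y,z)=xy+yz+zx$ for the median operation, I would record the two identities $m(x,y,0)=xy$ and $m(x+c,y+c,z+c)=m(x,y,z)+c$, both immediate from $c^2=c$ and $1+1=0$; the latter gives $f(x+c,y+c,z+c)=f(x,y,z)+c(1+\al_1+\al_2+\al_3)$ for every translation $t_c$.

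The argument then splits on the parity $p=\al_1+\al_2+\al_3$. If $p=0$, the displayed formula shows every translation preserves $f$; setting $c=\phi(0)$ and $\psi=t_c\circ\phi$, one checks that $\psi$ still preserves $f$ and that $\psi(0)=0$. Hence $\psi$ preserves $f(x,y,0)=xy+\al_3 x+\al_2 y+\al_0$, a non-linear binary term function, so $\psi\in\aba$ by Lemma \ref{ketvalt} and $\phi=t_c\circ\psi$ is an automorphism composed with a translation (trivial when $c=0$, non-identical otherwise). If instead $p=1$, then $f(x,x,x)=\al_0$ is constant, whence $\phi(\al_0)=\al_0$; substituting the $\phi$-fixed constant $\al_0$ into the third slot shows $\phi$ preserves $g(x,y)=f(x,y,\al_0)=xy+(\al_0+\al_3)x+(\al_0+\al_2)y+\al_0(\al_1+1)$, again a non-linear binary term function, so $\phi\in\aba$ by Lemma \ref{ketvalt}.

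The main obstacle is exactly the degenerate case where all variable-identifications are linear, since there Lemma \ref{ketvalt} cannot be applied directly. The crux is to recognize that $f$ is then the median plus an affine form, and to manufacture a non-linear binary function that $\phi$ preserves. The two devices that achieve this — normalizing by a translation to fix $0$ when $p=0$, and exploiting the $\phi$-fixed constant $\al_0$ when $p=1$ — are what make the median term collapse to a genuine product $xy$, and arranging for both parities to cooperate is the delicate point.
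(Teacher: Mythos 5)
Your proof is correct and takes essentially the same route as the paper: reduce to Lemma~\ref{ketvalt} by identifying pairs of variables, and in the degenerate case $f=xy+yz+zx+\al_3 x+\al_2 y+\al_1 z+\al_0$ split on the parity of $\al_1+\al_2+\al_3$, normalizing by the translation $t_{\phi(0)}$ in the even case and substituting the constant $f(x,x,x)=\al_0$ in the odd case (the paper writes this same function as $f(x,y,f(x,x,x))$). Your only departure is organizational: computing the three quadratic coefficients $\al_7+\al_5+\al_4$, $\al_7+\al_6+\al_5$, $\al_7+\al_6+\al_4$ and solving the $\mathbb{F}_2$-system replaces the paper's case analysis on $\al_7$ and the pattern of $\al_4,\al_5,\al_6$, which is a tidier bookkeeping of the same idea.
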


\begin{proof}
Since the permutation $\phi$ preserves $f$, it also preserves the function $f_{xy},f_{yz}$ and $f_{zx}$. Therefore if at least one of the functions $f_{xy},f_{yz}$ and $f_{zx}$ are non-linear then it follows from Lemma \ref{ketvalt} that $f\in \aba$, and therefore the statement of the lemma holds.
\begin{itemize}
\item
If $\al_7=1$ and $\al_6=\al_5$ then $f_{yz}$ is a binary non-linear term function. Since there are two identical values among $\al_4, \al_5$ and
$\al_6$ in the case of $\al_7=1$ the statement of the Lemma will hold.

\item If $\al_7 =0$ and exactly one of $\al_4, \al_5, \al_6$ is $1$: by symmetry we can assume that $\al_4=1$, then $f_{yz}$ is non-linear.

\item If $\al_7 =0$ and exactly one of $\al_4, \al_5, \al_6$ is $0$: by symmetry we can assume that $\al_4=0$, then $f_{xz}$ is non-linear.

\item If $\al_7 =0$ and $\al_4=\al_5=\al_6=1$ then $f$ can be written as
$$f(x,y,z)=xy +yz + zx +\al_3 x + \al_2 y + \al_1 z + \al_0.$$ The case when there are exactly zero or two $1$s are among $\al_1 ,\al_2$ and $\al_3$ is handled in Lemma \ref{eltolasok}. In the case when there are exactly one or three $1$s among $\al_1 ,\al_2  , \al_3 $ then
$f(a,a,a)=(f_{yz})_{xy}(a)=0$ or $f(a,a,a)=1$ for every $a \in \ba$. In this case $g(x,y)=f(x,y,f(x,x,x))$ is a binary non-linear term function, and then we can apply Lemma \ref{ketvalt} again.
\end{itemize}
\end{proof}

\begin{theorem}\label{msemidir}
Let $M$ denote the ternary function median: $M(x,y,z)=xy+yz+zx$. This is the same as the usual lattice-theoretic (lower or upper) median
$(x\vee y)\wedge(y \vee z)\wedge(z \vee x)$. Let $\Aut(M)$ denote the group of all permutations preserving $M$. Then $\Aut(M)=T \rtimes \aba$
\end{theorem}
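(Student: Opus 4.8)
The plan is to treat this theorem as the algebraic wrap-up of the analysis immediately preceding it, rather than as a fresh computation. The remark just above already settles the hard part: since $M(x,y,z)=xy+yz+zx$ is exactly the function $f(x,y,z)=xy+yz+zx+\al_3 x+\al_2 y+\al_1 z+\al_0$ with $\al_0=\al_1=\al_2=\al_3=0$, it falls into the ``zero ones'' case, so a permutation $\phi$ preserves $M$ if and only if $\phi=t\circ\psi$ for some translation $t\in T$ and some automorphism $\psi\in\aba$. Consequently, as a \emph{set}, $\aut{M}=T\cdot\aba=\{\,t\circ\psi:t\in T,\ \psi\in\aba\,\}$. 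What remains is purely group-theoretic: I would check the three conditions that promote this set product to an internal semidirect product, namely that $T$ and $\aba$ are subgroups of $\aut{M}$, that $T\cap\aba=\{\id\}$, and that $T\trianglelefteq\aut{M}$.

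The subgroup and intersection facts are quick. That $\aba\leq\aut{M}$ is immediate, as automorphisms preserve every term function. For $T$, the displayed computation in the proof of Lemma \ref{haromvalt} gives $M(x+c,y+c,z+c)=M(x,y,z)+c=t_c(M(x,y,z))$, so each $t_c$ preserves $M$, and since $t_c\circ t_d=t_{c+d}$ the set $T$ is a subgroup isomorphic to $(\ba,+)$. The intersection is trivial because any automorphism fixes $0$ while $t_c(0)=c$, so $t_c\in\aba$ forces $c=0$; equivalently, the decomposition $\phi=t\circ\psi$ is unique, since evaluating at $0$ forces $t=t_{\phi(0)}$ and hence $\psi=\thi$.

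The one genuinely substantive step is the normality of $T$, and this is where I would focus. Because $\aut{M}=T\cdot\aba$ and $T$ is abelian, it suffices to show that $\aba$ normalizes $T$. For $\psi\in\aba$ and $t_c\in T$, using that $\psi$ is additive (it preserves the Boolean-ring addition, i.e.\ the symmetric difference) I would compute
$$\psi\circ t_c\circ\psi^{-1}(a)=\psi\zj{\psi^{-1}(a)+c}=a+\psi(c)=t_{\psi(c)}(a),$$
so $\psi\circ t_c\circ\psi^{-1}=t_{\psi(c)}\in T$. This establishes $T\trianglelefteq\aut{M}$, and together with $T\cap\aba=\{\id\}$ and $\aut{M}=T\cdot\aba$ it yields $\aut{M}=T\rtimes\aba$. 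The main obstacle is thus not analytic at all — the preceding remark supplies the decomposition — but lies in the delicate point that automorphisms of $\ba$ are additive, which is precisely what makes the conjugation land back inside $T$ and gives the semidirect (rather than merely set-product) structure.
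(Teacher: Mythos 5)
Your proof is correct and takes essentially the same approach as the paper: the set-level decomposition $\aut{M}=T\cdot\aba$ obtained from Lemma \ref{haromvalt} (via the remark preceding the theorem), trivial intersection because automorphisms fix $0$, and normality of $T$ via the conjugation computation $\psi\circ t_c\circ\psi^{-1}=t_{\psi(c)}$ using the additivity of Boolean algebra automorphisms (the paper conjugates in the other direction, $\phi^{-1}\circ t_c\circ\phi=t_{\phi^{-1}(c)}$, which is immaterial). You are in fact slightly more careful than the paper at one point: the paper writes that ``$T$ is a normal subgroup of $\aba$,'' whereas what is meant and needed is that $\aba$ normalizes $T$, which, as you note, gives $T\trianglelefteq\aut{M}$ because $\aut{M}=T\cdot\aba$ and $T$ normalizes itself.
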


\begin{proof}

By Lemma \aref{eltolasok} every permutation of $\Aut(M)$ can be written as $\phi= t \circ \psi$ (where $t\in T$ and $\psi \in \aba$).
 Moreover, every permutation in $\aba$ and $T$ preserves the median so $\Aut(M)=\left< \aba , T \right>$.

The intersection of the groups $\aba$ and $T$ is the trivial group because every element of $\aba$ preserves  $0$ and the identity is the only $0$-preserving
translation.

Finally, the group $T$ is a normal subgroup of $\Aut(M)=\left< \aba , T \right>$. For this it is enough to show that the group $T$ is closed under conjugation of elements of $\aba$. Let $t_c$ be an arbitrary element of $T$ and $\phi$ be an arbitrary element of $\aba$. Then

$$(\phi^{-1} \circ t_c \circ \phi)(x)=\phi^{-1}(\phi(x)+c)=(\phi^{-1} \circ \phi)(x) + \phi^{-1}(c)= t_{\phi^{-1}(c)}(x)$$
so   $\phi^{-1}\circ t_c \circ \phi \in T$. 

\end{proof}


The case of the non-linear term functions with more than three variables can be reduced to the previous cases by the following lemma.

\begin{lemma}\label{ind}

Let $f$ be a non-linear term function of arity $k\geq 4$. Then there exist indices $1\leq i < j \leq k$ such that the function $g=f_{ij}$ is non-linear.
\end{lemma}

\begin{proof}
We will prove the statement by checking the following three cases:
\begin{itemize}
\item
The first case is when there is an $\eps$ such that $2 \leq |\eps | \leq k-2$ and $\ee_{\eps}=1$ holds. We have two indices $i$ and $j$
such that $1 \leq i < j \leq k$ and $\eps_i=\eps_j=0$. Then for this $\eps$ the term 
$\prod_{i=1}^k x_i^{\varepsilon_i}$ is with coefficient 1 in the functions
$f$ and  $f_{ij}$, as it is not altered by collapsing
$x_i$ and $x_j$ and this term cannot show up by collapsing those two variables, either.
So $f_{ij}$ is non-linear.

\item 
If the only non-linear monomial is the product of all the variables then every pair of indices $i$ and $j$ is suitable.

\item

The remaining case is when all non-linear monomials have a degree at least $(k-1)$ and there is a monomial with a degree of exactly $(k-1)$. Let $\eps$ denote a
vector corresponding to such monomial. Let $i$ and $j$ denote two indices such that $\eps_i=\eps_j=1$. Then if we replace $x_j$ with $x_i$ the monomial
of $f$ corresponding to $\eps$ will become a monomial of $f_{ij}$ with a degree of $(k-2)$. This monomial of $f_{ij}$ cannot be formed from any other monomial of
$f$,  so it will not be cancelled out.
\end{itemize}
\end{proof}

\begin{lemma}\label{nemlinear1}
	Let $f$ be a non-linear term function. Then $\aba \leq \Aut(f) \leq \Aut(M)$.
\end{lemma}

\begin{proof}
	First we show that there exists a non-linear term function $g$ such that the arity of $g$ is at most $3$, and $\aba \leq \Aut(f) \leq \Aut(g)$. Let $k$ be the arity of $f$. If $k\leq 3$, then there is nothing to prove. If $k\geq 4$, then by Lemma \ref{ind} we know that $f_{ij}$ is non-linear for some indices $1\leq i<j\leq k$. Then $\aba\leq \Aut(f)\leq \Aut(f_{ij})$. By iterating this construction we can find a non-linear function $g$ of arity at most 3 such that $\aba \leq \Aut(f) \leq \Aut(g)$.

	By Lemma \ref{haromvalt} we have $\Aut(g)\subset \left< \aba , T \right>$. The right handside is equal to $\Aut(M)$ by Lemma \ref{msemidir}. Therefore $\aba\leq \Aut(f)\leq \Aut(g)\leq \Aut(M)$.
\end{proof}


\begin{lemma}\label{nemlinear}
Let $f$ be an arbitrary non-linear term function. Then $\Aut(f)=\aba$ or $\Aut(f)=\Aut(M)$.
\end{lemma}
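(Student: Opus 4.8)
The plan is to leverage the semidirect decomposition $\aut{M}=T\rtimes\aba$ of Theorem \ref{msemidir} together with the containment $\aba\leq\aut{f}\leq\aut{M}$, which holds by Lemmas \ref{ind}, \ref{ketvalt} and \ref{haromvalt}. Since every permutation of $\aut{M}$ factors uniquely as $t_c\circ\psi$ with $\psi\in\aba$, and since $\aba\leq\aut{f}$, a permutation $t_c\circ\psi$ lies in $\aut{f}$ if and only if the translation $t_c$ does (multiply by $\psi^{-1}\in\aba\leq\aut{f}$). Hence $\aut{f}$ is completely determined by the set $S=\{c\in\ba: t_c\in\aut{f}\}$, which is a subgroup of $(\ba,+)$ because $S=\aut{f}\cap T$ and $t_c\circ t_d=t_{c+d}$. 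It therefore suffices to prove the dichotomy $S=\{0\}$ or $S=\ba$: in the first case $\aut{f}=\aba$, and in the second $\aut{f}=\langle\aba,T\rangle=\aut{M}$.

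The core of the argument is to compute exactly when a translation preserves $f$. A translation $t_c$ preserves $f$ precisely when $f(x_1+c,\dots,x_k+c)=f(x_1,\dots,x_k)+c$ for all arguments. I would expand each monomial $\prod_{\varepsilon_i=1}(x_i+c)$ and use $c^2=c$ to collapse every positive power of $c$ down to $c$ itself; this shows that the difference $f(x_1+c,\dots,x_k+c)-f(x_1,\dots,x_k)$ factors as $c\cdot P(x_1,\dots,x_k)$ for a single Boolean polynomial $P$ that does not involve $c$ (each coefficient of $P$ being an explicit parity sum of the coefficients $\ee_{\eps}$ of $f$). Since in the Boolean ring the equation $c\cdot P(\bar{x})=c$ is equivalent to $c\leq P(\bar{x})$, the translation $t_c$ preserves $f$ if and only if $c\leq P(\bar{x})$ for every $\bar{x}\in\ba^k$.

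Finally I would extract the dichotomy by testing $P$ on $0$/$1$ arguments. For $\bar\varepsilon\in\{0,1\}^k$ the value $P(\bar\varepsilon)$ lies in the two-element subalgebra $\{0,1\}$, and $P$ equals the constant polynomial $1$ exactly when $P(\bar\varepsilon)=1$ for all such $\bar\varepsilon$. If $P$ is identically $1$, then $c\leq P(\bar{x})$ holds for every $c$ and every $\bar{x}$, so $S=\ba$ and $\aut{f}=\aut{M}$. Otherwise some $\bar\varepsilon_0\in\{0,1\}^k$ satisfies $P(\bar\varepsilon_0)=0$, and substituting it into $c\leq P(\bar\varepsilon_0)=0$ forces $c=0$, so $S=\{0\}$ and $\aut{f}=\aba$. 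This also explains why no intermediate group survives: $t_1\in\aut{f}$ would force $1\leq P(\bar{x})$ for all $\bar{x}$, hence $P\equiv 1$, hence already $S=\ba$, so the only other $\aba$-invariant subgroup $\{0,1\}$ of $(\ba,+)$ can never equal $\aut{f}\cap T$.

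The main obstacle is the middle computation: carefully verifying that $f(\bar{x}+c)-f(\bar{x})$ carries exactly one factor of $c$, so that the preservation condition becomes the clean inequality $c\leq P(\bar{x})$ rather than something quadratic in $c$, and identifying the resulting polynomial $P$. Once this reduction is in place the final dichotomy is immediate, since a Boolean polynomial either vanishes at some $0$/$1$ point, which annihilates every nonzero translation, or is constantly $1$, which admits them all.
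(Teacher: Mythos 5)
Your proof is correct, and after the shared first step it takes a genuinely different route from the paper's. Both arguments begin identically: by Theorem \ref{msemidir} and the sandwich $\aba\leq\aut{f}\leq\aut{M}$, the group $\aut{f}$ is determined by $S=\{c\in\ba : t_c\in\aut{f}\}$, and the task is to show $S=\{0\}$ or $S=\ba$. The paper proceeds structurally: conjugating translations by automorphisms shows $S$ is a union of $\aba$-orbits containing $0$, so the orbit decomposition $\{0\}\cup\{1\}\cup(\ba\setminus\{0,1\})$ leaves only four candidates for $S$; the candidate $\ba\setminus\{1\}$ is discarded because it is not a subgroup, and the candidate $\{0,1\}$ is discarded by a variety argument --- the identity $f(x_1+c,\ldots,x_k+c)=f(x_1,\ldots,x_k)+c$ holds for $c$ in the two-element algebra $\mathfrak{B}_2$, hence in every Boolean algebra, since $\mathfrak{B}_2$ generates that variety, so $t_1\in\aut{f}$ already forces $S=\ba$. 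You skip the orbit analysis entirely: from the expansion $f(x_1+c,\ldots,x_k+c)=f(x_1,\ldots,x_k)+c\cdot P(x_1,\ldots,x_k)$ (which is correct, since $c^m=c$ collapses every higher power of $c$), membership $t_c\in\aut{f}$ becomes $c\leq P(\bar x)$ for all $\bar x$, and the dichotomy is read off from whether the multilinear polynomial $P$ is constantly $1$ or vanishes at some $0$--$1$ tuple. Your key fact --- a multilinear polynomial over $\mathbb{F}_2$ that equals $1$ on all of $\{0,1\}^k$ is the constant $1$, hence equals $1$ identically in every Boolean ring --- is the computational counterpart of the paper's appeal to $\mathfrak{B}_2$ generating the variety; indeed your closing remark that $t_1\in\aut{f}$ forces $P\equiv 1$ reproves exactly the paper's elimination of the case $S=\{0,1\}$. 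Your route is more elementary and uniform: one calculation disposes of all cases at once, with no orbit enumeration, no need to notice that $\ba\setminus\{1\}$ fails to be a subgroup, and no Birkhoff-style argument. What the paper's route buys in exchange is structural information: it exhibits the closed group $G$ with $T\cap G=\{t_0,t_1\}$ as a group strictly between $\aba$ and $\aut{M}$ that is not the automorphism group of any functional reduct, which is the concrete example behind the paper's earlier remark that closed supergroups of the automorphism group need not be in bijection with functional reducts.
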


\begin{proof}

Let $\Aut(f)$ be denoted by $G$. The group $G$ is uniquely determined by the subgroup $T \cap G$ because $\Aut(M)=T \rtimes \aba$ and $\aba \leq G \leq \Aut(M)$.
Let $c$ and $d$ be two elements of $\ba$ such that they are on the same orbit of $\aba$. We will show that $t_c \in G$ implies $t_d \in G$.
Let $\phi \in \aba$ an automorphism such that $\phi(d)=c$ then
$$(\phi^{-1} \circ t_c \circ \phi)(x)=\phi^{-1}(\phi(x)+c)=(\phi^{-1} \circ \phi)(x) + \phi^{-1}(c)= t_{d}(x)$$
The automorphism group $\aba$ has three orbits on $\ba$: $\{0\},\{1\}$ and $\ba \setminus \{0,1\}$ so one of the following four possibilities must hold
(using that $t_0=\operatorname{id} \in T \cap G$):
\begin{itemize}
\item $T \cap G = \{t_c | c \in \{ 0 \}\}$
\item $T \cap G = \{t_c | c \in \{ 0, 1 \}\}$
\item $T \cap G = \{t_c | c \in \ba \setminus \{ 1 \} \}$
\item $T \cap G = \{t_c | c \in \ba \}$
\end{itemize}
The first case corresponds to the case $G=\aba$ and the fourth corresponds to the case $G=\Aut(M)$. We will rule out the second and third possibility.

The third possibility can be ruled out by noticing that the set $ \{t_c | c \in \ba \setminus \{ 1 \} \}$ is not a subgroup. Let
$a \in \ba \setminus \{ 0, 1 \}$ be an arbitrary element then the translations $t_{a}$ and $t_{a+1}$ must be in $ \{t_c | c \in \ba \setminus \{ 1 \} \}=T \cap G$
so their composition $(t_{a+1} \circ t_a)(x)=x+a+a+1=t_1(x)$ must also be contained in $T \cap G$.


The second case gives us a
closed group $\aba \leq G \leq \Aut(M)$. We will show that this group cannot be obtained as the automorphism group
of a functional reduct. 

	Let $f$ be a term function which is preserved by the translation $t_1$.
Then the following two equations hold:
$$f(x_1 +1, x_2 +1, \ldots, x_k +1)= f(x_1, x_2, \ldots, x_k)+1,$$
and clearly
$$f(x_1 +0, x_2 +0, \ldots, x_k +0)= f(x_1, x_2, \ldots, x_k)+0.$$
In particular for all $x_1,x_2,\ldots,x_k,y\in \{0,1\}$ we have 
$$f(x_1 +x, x_2 +x, \ldots, x_k +y)= f(x_1, x_2, \ldots, x_k)+y.$$
	This means that the identity $$f(x_1 +x, x_2 +x, \ldots, x_k +y)= f(x_1, x_2, \ldots, x_k)+y$$ is satisfied in the subalgebra $\mathfrak{B}_2$. Therefore by Lemma \ref{rest} this identity also holds in $\ba$. Setting $y=c\in \ba$ we obtain that 
$$f(x_1 +c, x_2 +c, \ldots, x_k +c)= f(x_1, x_2, \ldots, x_k)+c$$ holds for all $x_1,\dots,x_k\in \ba$. Therefore $t_c$ preserves $f$ for every $c\in \ba$.

	Assume $G=\Aut(f)$ for some term function $f$. Then the argument above shows that $t_1\in \Aut(f)$ implies that $t_c \in \Aut(f)$ for arbitrary $c\in \ba$. This excludes the second case.
\end{proof}


	Lemma \ref{nemlinear} can be formulated in terms of first-order definability as follows.

\begin{lemma}\label{nemlinear_fo}
	Let $f$ be an arbitrary non-linear term function. Then the function $f$ is first-order interdefinable with either the operation $\wedge$ or the median operation $M$.  
\end{lemma}

\begin{proof}
	By Lemma \ref{nemlinear} we know that either $\Aut(f)=\aba$ or $\Aut(f)=\Aut(M)$. By Lemma \ref{metszet_eleg} we have $\aba=\Aut(\wedge)$. Then the statement of the lemma follows directly from Corollary \ref{omkat}.
\end{proof}

\section{The linear functional reducts}

In this section we will classify the linear functional reducts up to first-order interdefinability.

All linear term functions can be written as $l(x_1, x_2, \ldots, x_k)=x_1 + x_2 + \ldots + x_k + \al$ where $\al$ is either $0$ or $1$.

Consider the following eight term functions:
\begin{enumerate}
\item $0$
\item $1$
\item $x$
\item $\neg(x)=x+1$
\item $+_0(x,y)=x+y$
\item $+_1(x,y)=x+y+1$
\item $\Sigma(x,y,z)=x+y+z$
\item $\Sigma_1(x,y,z)=x+y+z+1$
\end{enumerate}
We will refer to this functions as \textbf{canonical linear functions}. We will need the corresponding automorphism groups:

\begin{csop}{$\Aut(0)$}
$\Aut(0)$ is the stabilizer of $0$ in the whole symmetric group $\sym$. This is a maximal proper subgroup of $\sym$ because for every permutation
$\phi \notin \Aut(0)$ the subgroup $\left< \Aut(0), \phi \right>$ is the whole $\sym$.
\end{csop}

\begin{csop}{$\Aut(1)$}
Similarly $\Aut(1)$ is the stabilizer of $1$ in the whole symmetric group $\sym$. This is also a maximal proper subgroup.
\end{csop}

\begin{csop}{$\Aut(\emptyset)$}
The group $\Aut(\emptyset)=\Aut(x)$ is the whole symmetric group.
\end{csop}
\begin{csop}{$\Aut(\neg)$}

For $\neg(x)=x+1$ let $\mxi$ denote an arbitrary maximal ideal of $\ba$. Define the following two groups:
Let $\simik$ be defined as an extension of the action of  $\simi$  to the whole $\ba$. For $\phi \in \simi$ define the action
 as $\phi (x)=\phi (x)$ if $x \in \mxi$ and $\phi(x)=\phi (x+1)+1$
if $x \notin \mxi$.

Let $Z_2^{\mxi}$ denote the group consisting of the following permutations: $\phi \in Z_2^{\mxi}$ if and only if for every $x \in \ba$
$\phi(x)=x$ or $\phi(x)=x+1$. This $Z_2^{\mxi}$ group is always the same regardless of the choice of the maximal ideal $\mxi$ because we do not refer to
$\mxi$ in its definition. The notation is justified because $Z_2^{\mxi}$ is isomorphic to a direct power of $Z_2$ where the direct factors are indexed with
the elements of $\mxi$.

We will show that $\Aut(\neg)=Z_2^{\mxi} \rtimes \simik$.

The group $Z_2^{\mxi}$ is a normal subgroup in $\Aut(\neg)$. Let $\phi \in Z_2^{\mxi}$ and $\psi \in \Aut(\neg)$ be two permutations, we need that the
permutation $\psi^{-1} \phi \psi$ is also in $Z_2^{\mxi}$. This holds if and only if for every $x \in \ba$ the image $\psi^{-1} \phi \psi(x)$ is 
either $x$ or $x+1$. So we have the following two cases:
\begin{itemize}
\item If $\phi(\psi(x))=\psi(x)$ then $\psi^{-1} \phi \psi(x)=x$.
\item If $\phi(\psi(x))=\psi(x)+1$ then $\psi^{-1} \phi \psi(x)=\psi^{-1}(\neg\psi(x))=\neg\psi^{-1}(\psi(x))=x+1$.
\end{itemize}
The intersection of the groups $Z_2^{\mxi}$ and $\simik$ is trivial because there are no permutations in $Z_2^{\mxi}$ such that the image of any
element of $\mxi$ is another element of $\mxi$.

We will show that every $\phi$ permutation in $\Aut(\neg)$ can be written as the composition of a permutation from $\simik$ and a permutation from
$Z_2^{\mxi}$. Let $\phi$ denote an arbitrary permutation from $\Aut(\neg)$, first we will define a permutation $\thi$:
\begin{itemize}
\item If $x \in \mxi$ then let $\thi(x)$ be the element from $\{\phi(x),\phi(x)+1\}$ the one which is in $\mxi$.
\item If $x \notin \mxi$ then let $\thi(x)$ be from $\{\phi(x),\phi(x)+1\}$ the one which is not in $\mxi$.
\end{itemize}

This $\thi$ will be an element of $\simik$ and $\thi^{-1} \circ \phi$ will be an element of $Z_2^{\mxi}$. The composition of these two permutations is $\phi$.
\end{csop}

\begin{csop}{$\Aut(+_0)$}\label{vter}

In case of the operation $+_0(x,y)=x+y$ the functional reduct is a vector space $\mathbb{F}_2^{\infty}$.
It has the automorphism group $\Aut(+_0)=\on{GL}(\infty,2)$.
\end{csop}
\begin{csop}{$\Aut(+_1)$}
In case of the operation $+_1(x,y)=x+y+1$ the functional reduct is also a vector space, but in this case the zero element of the vector space is 
$1$, and the addition is the operation $+_1(x,y)=x+y+1$. This vector space is isomorphic to the vector space in Group~\ref{vter}, the $\tau_1$ translation is an isomorphism:
$$\tau_1(+_1(x,y))=x+y+1+1=x+1+y+1=+_0(\tau_1(x),\tau_1(y))$$
and
$$\tau_1^{-1}(+_0(x,y))=x+y+1=+_1(\tau_1^{-1}(x),\tau_1^{-1}(y))$$
We will denote the automorphism group $\Aut(+_1)$ by $\on{GL}^1(\infty,2)$.
Since the two vector spaces are isomorphic, their automorphism groups are also isomorphic: $\Aut(+_1)=\on{GL}^1(\infty,2) \cong \on{GL}(\infty,2)=\Aut(+_0)$.
\end{csop}
\begin{csop}{$\Aut(\Sigma)$}
In  case of the operation $\Sigma(x,y,z)=x+y+z$ the functional reduct is an affine space. Let $x,y,z$ and $v$ be four pairwise different elements
from $\ba$. Then $\Sigma(x,y,z)=v$ holds if and only if these four elements form a two dimensional affine subspace. If 
$\Sigma(x,y,z)=v \Leftrightarrow x+y+z=v$ then $\{ x,y,z,v\} $ is a translate of the linear subspace $\{0,y+x,z+x,v+x\}$ by $x$.


The operation $\Sigma$ is preserved by all translations $T$ and all linear transformations $\on{GL}(\infty,2)$. The group of translations $T$ is a normal
subgroup in $\Aut(\Sigma)$ and $\Aut(\Sigma)$ is generated by the subgroups $T$ and $\on{GL}(\infty,2)$. So $\Aut(\Sigma)$ is a semidirect product
$T \rtimes \Aut(+_0)$.
\end{csop}
\begin{csop}{$\Aut(\Sigma_1)$}
In  case of  operation $\Sigma_1(x,y,z)=x+y+z+1$ we can define both $\neg(x)=\Sigma(x,x,x)$ and $\Sigma(x,y,z)=\Sigma_1(x,y,\neg(z))$.
We can also define $\Sigma_1(x,y,z)$ using $\neg(x)$ and $\Sigma(x,y,z)$: $\Sigma_1(x,y,z)=\Sigma(x,y,\neg(z))$.
By Corollary \ref{omkat2} $\Aut(\Sigma_1)=\Aut(\Sigma) \cap \Aut(\neg)$.

$T$ is a subgroup of $\Aut(\Sigma_1)$ because all translations preserve $\Sigma_1$. Using the fact that $T \triangleleft \Aut(\Sigma)$
the group $T$ will be a normal subgroup in $\Aut(\Sigma_1)$, too.


The group $\on{GL}(\infty,2)  \cap \Aut(\Sigma_1)$ consists of the complement-preserving linear permutations. These are exactly
those linear permutations which fix the $1$. We can conclude that

$$\Aut(\Sigma_1)=\Aut(\Sigma) \cap \Aut(\neg)=(T \rtimes \Aut(+_0))\cap \Aut(\neg)=$$
$$=T \rtimes (\Aut(+_0) \cap \Aut(\neg))=T \rtimes (\Aut(+_0)\cap \Aut(+_1))$$
\end{csop}
\par\addvspace{\baselineskip}
\begin{lemma}\label{linear}
For every linear term function $l$, there uniquely exists a canonical linear function $f$ such that $l$ and $f$ are first-order interdefinable.
\end{lemma}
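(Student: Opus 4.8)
The plan is to split the statement into existence and uniqueness. For existence I would write an arbitrary linear term function as $l(x_1,\dots,x_k)=x_1+\dots+x_k+\al$ with $\al\in\{0,1\}$, and argue that its first order interdefinability type is governed by the value of $\al$ together with the parity of $k$, with the cases $k=0$ and $k=1$ treated separately. The two basic moves are: \emph{collapsing} a pair of variables, which (since $x_i+x_i=0$) turns $l$ into a linear term function of arity $k-2$ with the same $\al$; and \emph{composing} a canonical function with itself, which builds sums over longer blocks of variables. The first move shows $\aut{l}\leq\aut{g}$ whenever $g$ arises from $l$ by identifying variables, and the second gives the reverse containment when $l$ is rebuilt from a canonical $g$; together they yield mutual first order definability, hence the desired interdefinability. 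One may equally phrase this through Corollary~\ref{omkat2}, since mutual term-definability forces the two automorphism groups to coincide.

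Concretely I would treat four families. If $k\geq 2$ is even, collapsing the surplus $k-2$ variables in pairs yields $x_1+x_2+\al$, i.e.\ $+_0$ or $+_1$; conversely iterating $+_0$ rebuilds every even sum with $\al=0$, while combining $k$ variables by $+_1$ uses $k-1$ applications and so contributes the constant $(k-1)\bmod 2=1$, rebuilding every even sum with $\al=1$. If $k\geq 3$ is odd, collapsing down to three free variables yields $x_1+x_2+x_3+\al$, i.e.\ $\Sigma$ or $\Sigma_1$, and iterating $\Sigma$ rebuilds all odd sums with $\al=0$. The only delicate reconstruction is $\Sigma_1$, since iterating it does not keep the constant equal to $1$; instead I would first recover $\neg(x)=\Sigma_1(x,x,x)$ and $\Sigma(x,y,z)=\Sigma_1(x,y,\neg(z))$, build $x_1+\dots+x_k$ with $\Sigma$, and apply $\neg$ once. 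The cases $k=0$ and $k=1$ are immediate: the constant $\al$ is $0$ or $1$, and $x+\al$ is $x$ or $\neg$.

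For uniqueness I would show that the eight automorphism groups computed above are pairwise distinct, so no two canonical functions are interdefinable. First I separate them by the number of orbits on $\ba$: the groups $\aut{0},\aut{1},\aut{+_0},\aut{+_1}$ have two orbits, whereas $\aut{x},\aut{\neg},\aut{\Sigma},\aut{\Sigma_1}$ are transitive. Among the two-orbit groups I distinguish by the fixed point ($0$ versus $1$) and by the strict inclusions of $\aut{+_0}$ in $\aut{0}$ and of $\aut{+_1}$ in $\aut{1}$. Among the transitive groups, $\aut{x}=\sym$ is the whole symmetric group; $\aut{\Sigma_1}\subsetneq\aut{\Sigma}$ because $\on{GL}\cap\on{GL}^1\subsetneq\on{GL}$; and $\aut{\neg}$ is separated from the two affine groups because it contains non-affine permutations coming from $Z_2^{\mxi}$, while $\aut{\Sigma}$ contains linear maps not fixing $1$, which fail to commute with complementation and hence lie outside $\aut{\neg}$.

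The main obstacle I anticipate is the bookkeeping of the constant term under composition: this is exactly why $\Sigma_1$ cannot be rebuilt by naive iteration and must be routed through $\neg$ and $\Sigma$, and it is the only place where a careful parity count is required. For uniqueness the subtle point is the four transitive groups, in particular confirming that $\aut{\neg}$ is incomparable with $\aut{\Sigma}$ (being non-affine) yet still properly contains the affine group $\aut{\Sigma_1}$.
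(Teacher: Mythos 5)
Your proposal is correct, and its overall architecture matches the paper's: existence by collapsing variables to reach a binary or ternary canonical function and then rebuilding $l$ by composing the canonical function with itself, and uniqueness by reducing, via Corollary~\ref{omkat2} (resp.\ Corollary~\ref{omkat}), to the pairwise distinctness of the eight automorphism groups. There are, however, two points where you genuinely diverge, both to your advantage. First, in the odd-arity case with constant term $1$ the paper rebuilds $l$ directly by the recursion $l_j=f(l_{j-1},f(x_{2j},x_{2j},x_{2j}),f(x_{2j+1},x_{2j+1},x_{2j+1}))$ with $f=\Sigma_1$ and claims $l=l_{(k-1)/2}$; since each application of $\Sigma_1$ contributes a $+1$, one actually gets $l_m=x_1+\cdots+x_{2m+1}+(m \bmod 2)$, so the paper's identity fails when $(k-1)/2$ is even (e.g.\ $k=5$, where it produces $x_1+\cdots+x_5$ instead of $x_1+\cdots+x_5+1$). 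Your detour through $\neg(x)=\Sigma_1(x,x,x)$ and $\Sigma(x,y,z)=\Sigma_1(x,y,\neg(z))$, building the plain sum with $\Sigma$ and negating once, is exactly the right repair and works uniformly in $k$; you correctly identified this as the one delicate spot. Second, for uniqueness the paper simply asserts that the eight groups ``are pairwise distinct,'' deferring the real separations to the subsequent lemma about the lattice; you instead supply the separations explicitly (two orbits versus transitivity, the fixed point $0$ versus $1$, the strict inclusions $\aut{+_0}\subsetneq\aut{0}$ and $\aut{\Sigma_1}\subsetneq\aut{\Sigma}$, and non-affine elements of $Z_2^{\mxi}$ separating $\aut{\neg}$ from $\aut{\Sigma}$ and $\aut{\Sigma_1}$), which makes your write-up self-contained where the paper's is not. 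In short: same strategy, but your version is more careful precisely where the paper's explicit formulas are loosest.
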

\begin{proof}



Let $l$ denote $l(x_1, x_2, \ldots, x_k)=x_1 + x_2 + \ldots + x_k + \al$.
First we will prove the existence of such $f$, then prove the uniqueness.
If the arity of $l$ is at most one then $l$ is a canonical linear function so we can choose itself as $f$.

Let $k$ denote the arity of $l$. If $k$ is even and at least two, then $f(x,y)=x+y+\al$ will be first-order interdefinable with $l$.
The function $f$ can be defined from $l$ because $f(x,y)=l(x,y,y, \ldots, y)$. For the other direction define the following series of functions: let $l_2$ be
$l_2=f(x_1,x_2)$ and recursively $l_j(x_1, x_2, \ldots, l_j)=f(l_{j-1}(x_1,x_2, \ldots, x_{j-1}),x_j)$ if $j>2$. Then $l=l_k$ so $l$ can be
defined from $f$.



If $k$ is odd and at least $3$ then $f(x,y,z)=x+y+z+\al$ will be first-order interdefinable with $l$. The function $f$ can be defined from $l$
because $f(x,y,z)=l(x,y,z,z, \ldots, z)$. Define the following series of functions: let $l_1$ be  $l_1=f(x_1,x_2,x_3)$ and recursively
$$l_j(x_1, x_2, \ldots, x_{2j+1})=$$
$$=f(l_{j-1}(x_1,x_2, \ldots, x_{2j-1}),f(x_{2j},x_{2j},x_{2j}),f(x_{2j+1},x_{2j+1},x_{2j+1})).$$
Then $l=l_{(k-1)/2}$ so $l$ can be defined from $f$.


We will show the uniqueness of the previous $f$. If for a given $l$ there are two different canonical linear functions
$f_1,f_2$ such that $l$ is first-order interdefinable with both, then $f_1$ will be interdefinable with $f_2$.
So their automorphism group will be the same. Using Corollary \ref{omkat} it is enough to show that the automorphism
groups of two different canonical linear functions are different.
We have described these automorphism groups, and they are pairwise distinct.
\end{proof}

\nc{\s}{\cdot}
\begin{figure}\label{frhalo}
\centering
\begin{tikzpicture}[scale=2]
\tikzstyle{every node}=[shape=rectangle, draw, fill=white!]
\tikzstyle{white}=[shape=rectangle, draw, fill=white!]
\tikzstyle{green}=[shape=rectangle, draw, fill=white!, font=\large]
\tikzstyle{blue}=[shape=rectangle, draw, fill=white!]
\tikzstyle{red}=[shape=rectangle, draw, fill=white!]
\tikzstyle{ures}=[shape=circle, draw, fill=white!]

\tikzstyle{k}=[draw=none, fill=none]

\path

(1, 2)  node[k] (100)  {}
(0, 1)  node[k] (101)  {}
(2, 1)  node[k] (102)  {}
(1, 0)  node[k] (103)  {}
(3, 1)  node[k] (104)  {}
(1, -1)  node[k] (105)  {}
(0, -1)  node[k] (106)  {}
(1, -2)  node[k] (107)  {}
(1, -3)  node[k] (108)  {}
(2 ,-1)  node[k] (109)  {}
(1, 1)  node[k] (110)  {}
(3, -1)  node[k] (111)  {}
(3, -2)  node[k] (112)  {}
;
\draw
(101) -- (100)
(102) -- (100)
(103) -- (101)
(103) -- (102)
(104) -- (100)
(111) -- (104)
(105) -- (103)
(105) -- (104)
(108) -- (107)
(106) -- (110)
(109) -- (110)
(107) -- (109)
(110) -- (100)
(112) -- (108)
(111) -- (110)
(109) -- (102)
(106) -- (101)
(107) -- (106)
(107) -- (105)
(111) -- (107)
(112) -- (111)
;
\path
(1, 2)  node[blue] (0)  {{$\emptyset$}}
(0, 1)  node[blue] (1)  {{$0$}}
(2, 1)  node[blue] (2)  {{$1$}}
(1, 0)  node[blue] (3)  {{$0,1$}}
(3, 1)  node[blue] (4)  {{$\neg$}}
(1, -1)  node[blue] (5)  {{$\neg , 0, 1$}}
(0, -1)  node[blue] (6)  {{$+_0,0$}}
(1, -2)  node[blue] (7)  {{$+_0,0,1$}}
(1, -3)  node[blue] (8)  {{$\s ,+_0,0,1$}}
(2 ,-1)  node[blue] (9)  {{$ +_1,1$}}
(1, 1)  node[blue] (10)  {{$\Sigma$}}
(3, -1)  node[blue] (11)  {{$\Sigma_1$}}
(3, -2)  node[blue] (12)  {{$M$}}

;


\end{tikzpicture}
\caption{\label{figure:func}The lattice of the functional reducts, ordered by the inclusion of their automorphism groups.}
\end{figure}

Our goal is to show that the automorphism groups of the functional reducts of $\ba$ ordered by inclusion form the lattice on
Figure \ref{figure:func}.

First we will prove that the (supposed) coatoms of the lattice form an antichain. Next we will describe the possible intersections
of the coatoms. Finally we determine the places of the two possible automorphism groups of the non-linear functional reducts.
There are three groups which can be obtained as the intersection of some coatoms and have not been characterized yet:

\begin{csop}{$\Aut(0,1)$}
The group $\Aut(0,1)$ is the pointwise stabilizer of $\{0,1\}$. It is a proper maximal subgroup of $\Aut(0)$ and $\Aut(1)$.
\end{csop}
\begin{csop}{$\Aut(\neg,0,1)$}

The group $\Aut(\neg,0,1)$ is the intersection of the groups $\Aut(0),\Aut(1)$ and $\Aut(\neg)$. Like the group $\Aut(\neg)$
can be obtained as a semidirect product $Z_2^{\mxi} \rtimes \simik$ the group $\Aut(\neg,0,1)$ can be decomposed as
$\Aut(\neg,0,1)=\left( Z_2^{\mxi} \right)_0 \rtimes \left( \simik \right)_0$. Here $\left( Z_2^{\mxi} \right)_0$ and
$\left( \simik \right)_0$ denote the stabilizer of the $0$ in the groups $Z_2^{\mxi}$ and $\simik$, respectively.
\end{csop}

\begin{csop}{$\Aut(+_0,0,1)$}
The group $\Aut(+_0,0,1)$ was briefly discussed in the section about $\Aut(\Sigma_1)$. It is the group of linear permutations
which preserve $1$ and the complementation.
\end{csop}

\begin{lemma}
The automorphism groups of the canonical linear functions are ordered by inclusion as the $[ \Aut(+_0,0,1), \Aut(\emptyset) ]$ 
interval on Figure \ref{figure:func}.
\end{lemma}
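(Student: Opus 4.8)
The plan is to read the whole interval off the explicit descriptions already obtained for the eight groups $\aut{0},\aut{1},\aut{x}=\aut{\emptyset}=\sym,\aut{\neg},\aut{+_0},\aut{+_1},\aut{\Sigma},\aut{\Sigma_1}$, together with the three intersection groups $\aut{0,1},\aut{\neg,0,1},\aut{+_0,0,1}$ introduced above. By Lemma \ref{linear} every linear term function is first order interdefinable with a canonical one, so every linear functional reduct is generated by canonical linear functions; hence by Corollary \ref{omkat2} the automorphism group of an arbitrary linear functional reduct is the intersection of some of the eight basic groups. It therefore suffices to compute the closure of these eight groups under intersection and to check that the resulting poset coincides with the interval $[\aut{+_0,0,1},\aut{\emptyset}]$ of Figure 1.

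First I would translate every group into a set-theoretic condition: $\aut{0}$ and $\aut{1}$ are the stabilizers of $0$ and of $1$; $\aut{\neg}$ is the group of complement-preserving permutations; $\aut{+_0}=\on{GL}(\infty,2)$ and $\aut{+_1}=\on{GL}^1(\infty,2)$ are the linear permutations fixing $0$, respectively $1$; and $\aut{\Sigma}=T\rtimes\aut{+_0}$ is the affine group, with $\aut{\Sigma_1}=\aut{\Sigma}\cap\aut{\neg}$. The elementary inclusions $\aut{+_0}\subseteq\aut{0}$, $\aut{+_1}\subseteq\aut{1}$, $\aut{+_0},\aut{+_1}\subseteq\aut{\Sigma}$ and $\aut{\Sigma_1}\subseteq\aut{\Sigma},\aut{\neg}$ are then immediate. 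The identities carrying the real content are obtained from the same descriptions: an affine permutation fixing $0$ is linear, so $\aut{0}\cap\aut{\Sigma}=\aut{+_0}$, and symmetrically $\aut{1}\cap\aut{\Sigma}=\aut{+_1}$; a $0$-fixing complement-preserving permutation automatically fixes $1$, so $\aut{0}\cap\aut{\neg}=\aut{1}\cap\aut{\neg}=\aut{\neg,0,1}$; and a linear permutation preserves complementation exactly when it fixes $1$, whence $\aut{+_0}\cap\aut{\neg}=\aut{+_0}\cap\aut{+_1}=\aut{+_0,0,1}$, the bottom element.

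Next I would run through the remaining pairwise intersections and verify that the eleven-element set $\{\sym,\aut{0},\aut{1},\aut{\neg},\aut{\Sigma},\aut{0,1},\aut{\neg,0,1},\aut{+_0},\aut{+_1},\aut{\Sigma_1},\aut{+_0,0,1}\}$ is closed under intersection; this shows it is the complete list of automorphism groups of linear functional reducts. To pin down the cover relations I would record the incomparabilities that keep the diagram from collapsing, principally that $\aut{+_0},\aut{+_1}$ and $\aut{\neg,0,1}$ are pairwise incomparable and that each of them is incomparable with $\aut{\Sigma_1}$. Properness of all the covering inclusions then follows from Lemma \ref{linear} and Corollary \ref{omkat}: the eight basic groups are already known to be distinct, and $\aut{0,1},\aut{\neg,0,1},\aut{+_0,0,1}$ are separated from one another and from the rest by exhibiting suitable permutations, equivalently by their differing orbit structures. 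Matching the resulting Hasse diagram with Figure 1 completes the argument.

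The hard part will be the bookkeeping of the intersection-closure together with the incomparabilities, rather than any single inclusion. The two points needing genuine care are the affine-meets-stabilizer identities $\aut{0}\cap\aut{\Sigma}=\aut{+_0}$ and $\aut{1}\cap\aut{\Sigma}=\aut{+_1}$, which rely essentially on the semidirect decomposition $\aut{\Sigma}=T\rtimes\aut{+_0}$, and the edges surrounding $\aut{\Sigma_1}$: the inclusion $\aut{+_0}\subseteq\aut{\Sigma_1}$ fails because $\aut{+_0}$ contains linear maps moving $1$, and $\aut{\neg,0,1}\subseteq\aut{\Sigma_1}$ fails because $\aut{\neg,0,1}$ contains non-affine permutations, while conversely $\aut{\Sigma_1}$ sits in neither $\aut{+_0}$ nor $\aut{\neg,0,1}$ because it contains the translations $T$. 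These exclusions are exactly what force $\aut{\Sigma_1}$ to be covered by $\aut{\Sigma}$ and $\aut{\neg}$ and to cover $\aut{+_0,0,1}$ directly.
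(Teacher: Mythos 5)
Your proposal is correct and arrives at the same eleven-element interval, but it certifies the key identities by a different mechanism than the paper. The paper runs every intersection through Corollary \ref{omkat2}: for instance $\aut{0} \cap \aut{\Sigma} = \aut{+_0}$ is witnessed by the term definitions $+_0(x,y)=\Sigma(x,y,0)$, $0=+_0(x,x)$ and $\Sigma(x,y,z)=+_0(x,+_0(y,z))$, so each equality of groups reduces to purely syntactic bookkeeping, with no structural knowledge of the groups required. You instead derive the same identities from the structural descriptions of Groups 1--8 --- the decomposition $\aut{\Sigma}=T \rtimes \aut{+_0}$, the observation that a complement-preserving permutation fixing $0$ must fix $1$, and the observation that a linear permutation preserves $\neg$ exactly when it fixes $1$ --- which is more conceptual but leans on the semidirect-product results having been established beforehand (they were). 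The second difference is in handling non-inclusions: the paper proves up front that $\aut{0},\aut{1},\aut{\Sigma},\aut{\neg}$ form an antichain by exhibiting explicit permutations (the $4$-cycle $(a,a+1,b,1)$, the $6$-cycle $(a,b,0,a+1,b+1,1)$, translations, and $1$-moving linear maps), whereas you recover all incomparabilities from distinctness of the eleven groups together with closure under intersection, using that $A \subseteq B$ if and only if $A \cap B = A$. This is a genuine economy --- it subsumes the paper's antichain computation --- but note that to complete it you still owe essentially the same explicit witnesses the paper produces (a permutation fixing $0$ but not $1$, a non-linear complement-preserving map fixing both $0$ and $1$, a linear map moving $1$, and so on), since distinctness of the groups is exactly what those permutations certify; the paper itself only asserts this distinctness in Lemma \ref{linear} at the same level of detail as you do. Both routes are sound: the paper's is more mechanical and self-contained at this point of the text, yours is shorter where the structure theory has already been paid for.
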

\begin{proof}

We will show that the groups $\Aut(0),\Aut(1),\Aut(\Sigma)$ and $\Aut(\neg)$ form an antichain.
\begin{itemize}
\item Let $a,b \in \ba \setminus \{0,1\}$ be arbitrary elements such that $a \neq \neg b$. Then the $4$-cycle $(a,a+1,b,1)$
is in $\aut{0}$. We will denote this permutation by $\phi$.
Then $\Aut(0) \nb \Aut(1)$ because $\phi(1)=a \neq 1$.
$\Aut(0) \nb \Aut(\neg)$ because $\phi(\neg a)=b \neq a=\neg \phi (a)$.
And $\Aut(0) \nb \Aut(\Sigma)$ because $\phi(\Sigma(a,a+1,1))=0 \neq a+b+1=\Sigma(\phi(a),\phi(a+1),\phi(1))$.
\item Similarly $\Aut(1) \nb \Aut(0)$,$\Aut(1) \nb \Aut(\neg)$ and $\Aut(1) \nb \Aut(\Sigma)$.

\item  Let $a,b \in \ba \setminus \{0,1\}$ be arbitrary elements such that $a \neq \neg b$. Then the $6$-cycle 
$(a,b,0,a+1,b+1,1)$ is in $\Aut(\neg)$. We will denote this permutation by $\phi$.
$\Aut(\neg) \nb \Aut(0)$ and $\Aut(\neg) \nb \Aut(1)$ because $\phi$ does not fix $0$ and $1$. Finally $\Aut(\neg) \nb \Aut(\Sigma)$
because $\phi(\Sigma(a,b,0))=a+b \neq a+b+1=\Sigma(\phi(a),\phi(b),\phi(0))$

\item 
Let $\tau_a(x)=x+a$ be an arbitrary translation. Then $\tau_a$ preserves $\Sigma$ but if $a \neq 0$ then
$\tau_a$ does not preserve $0$ or $1$. So $\Aut(\Sigma) \nb \Aut(0)$ and $\Aut(\Sigma) \nb \Aut(1)$.
Let $\phi$ be a permutation from $ \on{GL}(\infty,2)$ such that $\phi$ does not fix $1$. Then $\phi$ does not preserve
$\neg$ so $\Aut(\Sigma) \nb \Aut(\neg)$ because $\Aut(+_0)=\on{GL}(\infty,2) \subset \Aut(\Sigma)$.
\end{itemize}

We will determine the possible intersections of $\Aut(0),\Aut(1),\Aut(\Sigma)$ and $\Aut(\neg)$.
The intersections of every possible pair:
\begin{itemize}
\item $\Aut(0) \cap \Aut(1)=\Aut(0,1)$ by the definition of stabilizers.
\item $\Aut(0) \cap \Aut(\Sigma)=\Aut(+_0)=\on{GL}(\infty,2)$ because
$+_0(x,y)=\Sigma(x,y,0)$, $0=+_0(x,x)$ and $\Sigma(x,y,z)=+_0(x,+_0(y,x))$,  using Corollary \aref{omkat2}
\item $\Aut(0) \cap \Aut(\neg)= \Aut(\neg, 0, 1)$ 
\item $\Aut(1) \cap \Aut(\Sigma)=\Aut(+_1)=\on{GL}^1(\infty,2)$ because
$+_1(x,y)=\Sigma(x,y,1)$, $1=+_1(x,x)$ and $\Sigma(x,y,z)=+_1(x,+_1(y,x))$ using Corollary \aref{omkat2}
\item $\Aut(0) \cap \Aut(\neg)= \Aut(\neg, 0, 1)$ 
\item $\Aut(\Sigma) \cap \Aut(\neg)=\Aut(\Sigma_1)$ because
$\Sigma(x,y,z)=\Sigma_1(x,y,\Sigma_1(z,z,z))$, $\neg x=\Sigma_1(x,x,x)$ and
$\Sigma_1(x,y,z)=\Sigma(x,y,\neg z)$ using Corollary \aref{omkat2}.
\end{itemize}
The possible triple intersections:
\begin{itemize}
\item 
$\Aut(0) \cap \Aut(1) \cap \Aut(\Sigma)=\Aut(+_0,1)$ because $0=+_0(x,x)$, $1=1$, $\Sigma(x,y,z)=+_0(x,+_0(y,x))$ and
$+_0(x,y)=\Sigma(x,y,0)$, $1=1$ using Corollary \aref{omkat2}.

\item $\Aut(0) \cap \Aut(1) \cap \Aut(\neg)=\Aut(\neg, 0, 1)$ 

\item $\Aut(0) \cap \Aut(\Sigma) \cap \Aut(\neg)=\Aut(+_0,1)$ because $0=+_0(x,x)$, $\Sigma(x,y,z)=+_0(x,+_0(y,x))$, $\neg x=+_0(x,1)$
and $+_0(x,y)=\Sigma(x,y,0)$, $\neg 0=1$ using Corollary \aref{omkat2}.

\item $\Aut(1) \cap \Aut(\Sigma) \cap \Aut(\neg)=\Aut(+_0,1)$ because $1=1$, $\Sigma(x,y,z)=+_0(x,+_0(y,x))$,
$\neg x=+_0(x,1)$ and $+_0(x,y)=\Sigma(x,y,\neg 1)$, $1=1$ using Corollary \aref{omkat2}.
\end{itemize}

The intersection of all four elements of the antichain:
\begin{itemize}
\item $\Aut(0) \cap \Aut(1) \cap \Aut(\Sigma) \cap \Aut(\neg)=\Aut(+_0,1)$ because
$0=+_0(x,x)$, $1=1$, $\Sigma(x,y,z)=+_0(x,+_0(y,x))$, $\neg x=+_0(x,1)$ and $+_0(x,y)=\Sigma(x,y,0)$, $\neg 0=1$
using Corollary \aref{omkat2}.
\end{itemize}
So we can conclude that the automorphism group of any canonical linear function can be obtained as the intersection of some of
the groups $\Aut(0),\Aut(1),\Aut(\Sigma)$ and $\Aut(\neg)$. The group $\Aut(x)=\sym$ corresponds to the empty intersection.

There are three other groups which can be obtained as the intersection of some coatoms:
\begin{itemize}
\item The group $\Aut(0,1)=\on{Sym}_{(0,1)}(\ba)$.
\item The group $\Aut(\neg,0,1)$.
\item The group $\Aut(+_0,1)$.
\end{itemize}
These are the groups of number 9, 10 and 11.
\end{proof}

\section{The functional reducts of the countable atomless Boolean algebra}

In this section we will finish the classification of the functional reducts.
In the previous section we finished the case of the linear functional reducts, the remaining case is the non-linear one.

\begin{theorem}\label{halometszet}
The possible automorphism groups of functional reducts are ordered by inclusion as in Figure \ref{figure:func}.
\end{theorem}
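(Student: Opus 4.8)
The plan is to reduce the whole statement to the single-generator classifications already proved together with one intersection computation. For an arbitrary functional reduct $\cc=(B,t_1,\dots,t_k)$ one has, directly from the definition, $\aut{\cc}=\aut{t_1}\cap\dots\cap\aut{t_k}$; hence it suffices to check two things: that every single-generator group $\aut{t}$ is one of the thirteen groups already named, and that the family of these thirteen groups is closed under intersection. The first point is immediate from Lemma~\aref{linear} (if $t$ is linear then $\aut{t}$ is one of the eight canonical linear groups) and Lemma~\aref{nemlinear} (if $t$ is non-linear then $\aut{t}$ is $\aba$ or $\aut{M}$).

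For closure under intersection I would first invoke the previous lemma, which already realizes the eleven linear groups as the intersections of the four coatoms $\aut{0},\aut{1},\aut{\neg},\aut{\Sigma}$; thus the linear part of the list is self-contained. Next, $\aba$ behaves as the bottom element: since every automorphism of $\ba$ preserves every term function we always have $\aba\leq\aut{\cc}$, so if some generator is non-linear with group $\aba$ then $\aba\leq\aut{\cc}\leq\aba$ and $\aut{\cc}=\aba$. The only remaining work is therefore to intersect $\aut{M}$ with the linear groups.

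The key device here is the decomposition $\aut{M}=T\rtimes\aba$ from Theorem~\aref{msemidir}, together with the remark that every automorphism of $\ba$ preserves each canonical linear operation, so $\aba\leq L$ for all eight linear groups $L$. This forces $\aut{M}\cap L=(T\cap L)\rtimes\aba$, reducing the computation to the single question of which translations lie in $L$. A one-line check gives $T\leq\aut{\neg}$ and $T\leq\aut{\Sigma}$, while $T\cap\aut{0}=T\cap\aut{1}=T\cap\aut{+_0}=\{\id\}$; hence $\aut{M}\leq\aut{\neg}\cap\aut{\Sigma}=\aut{\Sigma_1}$, and for every linear group $L$ the intersection $\aut{M}\cap L$ equals $\aut{M}$ when $L\in\{\sym,\aut{\neg},\aut{\Sigma},\aut{\Sigma_1}\}$ and equals $\aba$ otherwise. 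Consequently adjoining the median creates no group outside the list, and the thirteen groups are closed under intersection.

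It then remains to confirm realizability and read off the diagram: each of the thirteen groups is the automorphism group of an explicit reduct (the eight canonical functions, the pair of constants, $\neg$ together with the constants, the median $M$, and multiplication), and the inclusions collected above are exactly those drawn in Figure~1, the antichain property of the four coatoms having already been established in the previous lemma. I expect the only genuinely delicate point to be the placement of $\aut{M}$ \emph{strictly} between $\aba$ and $\aut{\Sigma_1}$: the inclusion $\aut{M}\subsetneq\aut{\Sigma_1}$ is proper because $\aut{\Sigma_1}=T\rtimes(\aut{+_0}\cap\aut{+_1})$ is built from the linear maps fixing $1$, which form a strictly larger group than $\aba$ since such a map need not preserve multiplication. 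The remaining distinctness checks among the thirteen groups are routine bookkeeping with the orbit structure of $\aba$ on $\ba$.
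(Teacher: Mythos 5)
Your proposal is correct and rests on the same pillars as the paper's own proof: Lemma~\ref{nemlinear}, the decomposition $\aut{M}=T\rtimes\aba$ of Theorem~\ref{msemidir}, and the lemma ordering the canonical linear groups. The organization is somewhat different, and arguably more complete. The paper verifies $\aut{M}\leq\aut{\Sigma_1}$ by computing directly that translations preserve $\Sigma_1$, and obtains $\aut{M}\neq\aut{\Sigma_1}$ by intersecting both groups with $\aut{0}$; you instead compute all intersections $\aut{M}\cap L$ uniformly from the formula $\aut{M}\cap L=(T\cap L)\rtimes\aba$, which is valid because $\aba\leq L$ for every linear group $L$ and the decomposition of an element of $T\rtimes\aba$ is unique. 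You also make explicit the reduction of a reduct with several operations to closure of the thirteen groups under intersection (via $\aut{\cc}=\aut{t_1}\cap\dots\cap\aut{t_k}$ and the fact that the eleven linear groups are themselves intersections of the four coatoms); the paper leaves this bookkeeping implicit.

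One step of yours is asserted rather than proved: the strictness $\aba\subsetneq\aut{+_0}\cap\aut{+_1}$ (equivalently $\aut{M}\subsetneq\aut{\Sigma_1}$, and also equivalent to $\aba\neq\aut{+_0,0,1}$) is justified only by the phrase ``such a map need not preserve multiplication,'' which is the claim itself rather than an argument. The paper spends half of its proof precisely on this point, producing a witness: take $a,b\in\ba$ with $0<a<b<1$; then $1,a,b$ are linearly independent, so since $\aut{+_0}=\on{GL}(\infty,2)$ is the full linear group there is a linear permutation $\phi$ fixing $1$ and exchanging $a$ and $b$, and $\phi\notin\aba$ because $a<b$ while $\phi(a)>\phi(b)$. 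Add this witness (it uses only the description of $\aut{+_0}$ already established in the paper) and your proof is complete; the remaining distinctness checks do follow, as you say, from the earlier lemmas.
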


\begin{proof}
If $f$ is a non-linear term function then  $\Aut(f)=\aba$ or $\Aut(f)=\Aut(M)$ by Lemma \ref{nemlinear}.
The group $\aba$ is the minimal element of the lattice. We need to find the place of $\Aut(M)$.
$\Aut(M) \nb \Aut(0)$ and $\Aut(M) \nb \Aut(1)$ because $T \leq \Aut(M)$. We will show that $\Aut(M) \leq \Aut(\Sigma_1)$.
 By Theorem \ref{msemidir} 
it is enough to show that translations preserve $\Sigma_1$. Let $\tau_a(x)=x+a$ be arbitrary translation then
$$\tau_a(\Sigma_1(x,y,z))=x+y+z+1+a=x+a+y+a+z+a+1=$$
$$=\Sigma_1(\tau_a(x),\tau_a(y),\tau_a(z))$$
so translations preserve $\Sigma_1$.

In order to finish our proof it is enough to show that $\Aut(\ba)\neq \Aut(+_0,0,1)$ and $\Aut(M)\neq \Aut(\Sigma_1)$.
Let $a,b \in \ba$ two elements such that $0<a<b<1$. In this case $a,b$ and $1$ are linearly independent in the vector space structure,
so there is a permutation $\phi$ in the linear group which fixes the $1$ and switches $a$ and $b$. This permutation cannot
be a Boolean algebra automorphism because $a<b$ and $\phi(a)>\phi(b)$. This implies $\Aut(\ba)\neq \Aut(+_0,0,1)$.
The other statement follows from the fact $\Aut(M)\cap \Aut(0)=\Aut(\ba)$ and $\Aut(\Sigma_1)\cap \Aut(0)=\Aut(+_0,0,1)$.
\end{proof}

So we proved that the possible automorphism groups of functional reducts are exactly as in Figure \ref{figure:func}. 

Regarding the place of functional reducts amongst all reducts, all of the reducts currently known to us have the same algebraic closure operator as some functional reduct. Therefore the following problems arise naturally:

\begin{problem}\label{connectionProblem}
Find a connection between functional and (relational) first-order definable reducts. Is it true that the (model theoretic) algebraic closure operation of any first-order definable reduct is always that of a functional reduct?
\end{problem}

\begin{problem}\label{allReductsProblem}
Find all first-order definable reducts of the homogeneous  Boolean algebra. 
\end{problem}

 
	We give a list of those reducts of the countable atomless Boolean algebra that we managed to identify. Our list contains 12 infinite ascending chains, and 23 individual reducts. Note that this list might be incomplete, we only provide it to show how our current result might be extended. This list is a complete sublattice of the lattice of the supergroups: it is closed under intersections and under taking the smallest closed group containing any given subset of the list. 

	First we need the following definitions.

\begin{definition}\label{independence}
Two elements $a, b \in \ba$ are independent if they generate a $16$-element Boolean algebra.
\end{definition}

	The following definition is from \cite{vegtelen}.

\begin{definition}\label{inf_chain}
	Let $W_n$ be an $n$-codimensional subspace of $\ba$ containing $1$. Then $h_n\in \sym$ is defined by $h_n(v)=v+1$ for $v\in W_n$, and $h_n(v)=v$ otherwise.

	We define $H_n$ to be the closure of the group $\langle \Aut(+_0,0,1), h_n\rangle$. 
\end{definition}

Since $\Aut(+_0,0,1)$ acts transitively on subspaces of codimension $n$, we see that the group $H_n$ does not depend on our particular choice of $W_n$, only on the codimension $n$. By Proposition 6 in \cite{vegtelen} we know that the groups $H_1,H_2,\dots$ form an infinite ascending chain.

	Now we are ready to give our list of reducts. We only label those reducts here which are not functional reducts. We will describe the reduct, its automorphism group, or both, without any proofs. There are 12 individual reducts which are not functional reducts. There are indexed by lowercase letters (a)-(l). The reducts in the 12 infinite ascending chains are labelled by a lowercase letter (m)-(x), and a natural number indicating their place in the chain. Two of the reducts in these chains are also functional reducts. The automorphism group of a reduct labelled $x$ will be denoted by $G_x$.

\begin{enumerate}[(a)]
\item Let $f$ denote the permutation $f(0)=0, f(1)= 1$ and  $f(x)=x+1$ for all other elements. Then $G_a = \left< \aba, f \right>$. The group $\aba$ is a subgroup of index $2$ in $G_a$.
\item Let $g$ denote the permutation $g(0)=1, g(1)= 0$ and  $g(x)=x$ for all other elements. Then $G_b = \left< \aba, g \right>$. The group $\aba$ is a subgroup of index $2$ in $G_b$.
\item Let $h$ denote the permutation $h(x)=x+1$. Then $G_c = \left< \aba, h \right>$. The group $\aba$ is a subgroup of index $2$ in $G_c$.
\item Let $f, g, h$ denote permutations as in the definitions of the reducts (a), (b) and (c). Note that $fg=h$. Then
$G_d = \left< \aba, f, g \right>$. The group $\aba$ is a subgroup of index $4$ in $G_d$.
\item This reduct can be characterized by the the independence relation (Definition \ref{independence}). The automorphism group $G_e$ consists of the permutations of the form $g_1g_2$ where $g_1 \in \aba$ and $g_2(x) \in \{x, x+1\}$ for every $x \in \ba$.
\item The group $G_f$ can be obtained as $G_f = G_e\cap \Aut(0,1)$.
\item The group $G_g$ can be obtained as $G_g = \cl{ \Aut(M), G_e }$.
\item The group $G_h$ can be obtained as $G_h = \cl{ \Aut(+_0, 0, 1), G_e }$.
\item The group $G_i$ can be obtained as $G_i = \cl{ \Aut(+_0, 0, 1), G_f }$.
\item The group $G_j$ can be obtained as $G_j = \cl{ \Aut(\Sigma_1), G_e }$.
\item The group $G_k$ is the group of permutations fixing the set $\{0,1\}$ setwise.
\item The group $G_l$ is the subgroup of $\Aut(\neg)$ fixing the set $\{0,1\}$ setwise.
\item We put $G_{m_i}:=H_i\cap \Aut(0)$ (see Definition \ref{inf_chain}). Then $G_{m_1}=H_1\cap \Aut(0)=\Aut(+_0,0,1)$.
\item Let $f$ be the permutation defined in the definition of $G_a$. Then
$G_{n_i} = \left < G_{m_i}, f \right>$.
\item Let $g$ be the permutation defined in the definition of $G_b$. Then
$G_{o_i} = \left < G_{m_i}, g \right>$.
\item Let $h$ be the permutation defined in the definition of $G_c$. Then
$G_{p_i} = \left < G_{m_i}, h \right>=H_i$.
\item Let $f,g$ be the permutation defined in the definition of $G_a$ and $G_b$. Then
$G_{q_i} = \left < G_{m_i}, f,g \right>$.
\item The group $G_{r_i}$ can be obtained as $G_{r_i}=\cl{\Aut(M),G_{m_i}}$. Then $G_{r_1}=\Aut(\Sigma_1)$.
\item The group $G_{s_i}$ can be obtained as $G_{s_i}=G_{m_i}\cap G_e$.
\item Let $f$ be the permutation defined in the definition of $G_a$. Then
$G_{t_i} = \left < G_{s_i}, f \right>$.
\item Let $g$ be the permutation defined in the definition of $G_b$. Then
$G_{u_i} = \left < G_{s_i}, g \right>$.
\item Let $h$ be the permutation defined in the definition of $G_c$. Then
$G_{v_i} = \left < G_{s_i}, h \right>$.
\item Let $f,g$ be the permutation defined in the definition of $G_a$ and $G_b$. Then
$G_{w_i} = \left < G_{s_i}, f,g \right>$.
\item The group $G_{x_i}$ can be obtained as $G_{x_i}=\cl{\Aut(M),G_{s_i}}$.
\end{enumerate}
We hope that the above list is close to complete, and that this classification of functional reducts will help for classifying all reducts.

\begin{figure}\label{frhalo2}
\centering
\scalebox{.8}{\begin{tikzpicture}[yscale=0.31,xscale=0.5]
	\begin{pgfonlayer}{nodelayer}
		\node [style={light_blue3_nf}, label=center:\mysize{$c$}] (1) at (-1.25, 3.25) {};
		\node [style={light_blue3_nf}, label=center:\mysize{$b$}] (2) at (-3.25, 3.25) {};
		\node [style={light_blue3_nf}, label=center:\mysize{$a$}] (3) at (-5.25, 3.25) {};
		\node [style={light_blue3_f}] (4) at (-5.25, 0) {$\cdot,+_0,0,1$};
		\node [style={light_blue3_nf}, label=center:\mysize{$d$}] (5) at (-4.5, 6.25) {};
		\node [style={dark_blue3_f}] (6) at (-1.25, 7.25) {$M$};
		\node [style={dark_green2_nf}, label=center:\mysize{$v_1$}] (7) at (-7.75, 6.25) {};
		\node [style={dark_green2_nf}, label=center:\mysize{$u_1$}] (8) at (-9.75, 6.25) {};
		\node [style={dark_green2_nf}, label=center:\mysize{$t_1$}] (9) at (-11.75, 6.25) {};
		\node [style={dark_green1_nf}, label=center:\mysize{$s_1$}] (10) at (-13.75, 3.25) {};
		\node [style={dark_green2_nf}, label=center:\mysize{$w_1$}] (11) at (-11, 9.25) {};
		\node [style={cornflower_blue_nf}, label=center:\mysize{$x_1$}] (22) at (-5.25, 9.25) {};
		\node [style={dark_yellow2_nf}, label=center:\mysize{$p_1$}] (23) at (-17.25, 10.25) {};
		\node [style={dark_yellow2_nf}, label=center:\mysize{$o_1$}] (24) at (-19.25, 10.25) {};
		\node [style={dark_yellow2_nf}, label=center:\mysize{$n_1$}] (25) at (-21.25, 10.25) {};
		\node [style={green_f}] (26) at (-23.25, 7.25) {$+_0,0,1$};
		\node [style={dark_yellow2_nf}, label=center:\mysize{$q_1$}] (27) at (-20.25, 13.25) {};
		\node [style={dark_cornflower_blue2_f}] (28) at (-15.25, 13.25) {$\Sigma_1$};
		\node [style={dark_green2_nf}, label=center:\mysize{$v_2$}] (29) at (-7.75, 16.75) {};
		\node [style={dark_green2_nf}, label=center:\mysize{$u_2$}] (30) at (-9.75, 16.75) {};
		\node [style={dark_green2_nf}, label=center:\mysize{$t_2$}] (31) at (-11.75, 16.75) {};
		\node [style={dark_green1_nf}, label=center:\mysize{$s_2$}] (32) at (-13.75, 13.75) {};
		\node [style={dark_green2_nf}, label=center:\mysize{$w_2$}] (33) at (-11, 19.75) {};
		\node [style={dark_yellow2_nf}, label=center:\mysize{$p_2$}] (34) at (-17.25, 20.75) {};
		\node [style={dark_yellow2_nf}, label=center:\mysize{$o_2$}] (35) at (-19.25, 20.75) {};
		\node [style={dark_yellow2_nf}, label=center:\mysize{$n_2$}] (36) at (-21.25, 20.75) {};
		\node [style={dark_yellow1}, label=center:\mysize{$m_2$}] (37) at (-23.25, 17.75) {};
		\node [style={dark_yellow2_nf}, label=center:\mysize{$q_2$}] (38) at (-20.25, 23.75) {};
		\node [style={dark_cornflower_blue2_nf}, label=center:\mysize{$r_2$}] (39) at (-15.25, 23.75) {};
		\node [style={dark_green2_nf}, label=center:\mysize{$v_3$}] (40) at (-7.75, 27.25) {};
		\node [style={dark_green2_nf}, label=center:\mysize{$u_3$}] (41) at (-9.75, 27.25) {};
		\node [style={dark_green2_nf}, label=center:\mysize{$t_3$}] (42) at (-11.75, 27.25) {};
		\node [style={dark_green1_nf}, label=center:\mysize{$s_3$}] (43) at (-13.75, 24.25) {};
		\node [style={dark_green2_nf}, label=center:\mysize{$w_3$}] (44) at (-11, 30.25) {};
		\node [style={dark_yellow2_nf}, label=center:\mysize{$p_3$}] (45) at (-17.25, 31.25) {};
		\node [style={dark_yellow2_nf}, label=center:\mysize{$o_3$}] (46) at (-19.25, 31.25) {};
		\node [style={dark_yellow2_nf}, label=center:\mysize{$n_3$}] (47) at (-21.25, 31.25) {};
		\node [style={dark_yellow1}, label=center:\mysize{$m_3$}] (48) at (-23.25, 28.25) {};
		\node [style={dark_yellow2_nf}, label=center:\mysize{$q_3$}] (49) at (-20.25, 34.25) {};
		\node [style={dark_cornflower_blue2_nf}, label=center:\mysize{$r_3$}] (50) at (-15.25, 34.25) {};
		\node [style={cornflower_blue_nf}, label=center:\mysize{$x_2$}] (52) at (-5.25, 19.75) {};
		\node [style={cornflower_blue_nf}, label=center:\mysize{$x_3$}] (54) at (-5.25, 30.25) {};
		\node [style={magenta_nf}] (55) at (-13.75, 54.25) {$f$};
		\node [style={dark_orange2_nf}] (56) at (-23.25, 58.25) {$i$};
		\node [style={magenta_nf}] (57) at (-11, 61.25) {$e$};
		\node [style={dark_orange2_nf}] (58) at (-20.5, 65.25) {$h$};
		\node [style=new style 0] (59) at (-5.25, 67.25) {$g$};
		\node [style={dark_orange2_nf}] (61) at (-14.75, 71.25) {$j$};
		\node [style={green_f}] (62) at (-27.25, 63.25) {$+_1,1$};
		\node [style={green_f}] (63) at (-32.25, 63.25) {$+_0,0$};
		\node [style={green_f}] (64) at (-20, 76.25) {$\Sigma$};
		\node [style={red_f}] (65) at (-6.25, 71.75) {$\neg,0,1$};
		\node [style={red_nf}] (66) at (-6.25, 75.5) {$l$};
		\node [style={red_f}] (67) at (-6.25, 77.5) {$\neg$};
		\node [style={red_f}] (68) at (-14.75, 75.75) {0,1};
		\node [style={red_f}] (69) at (-17.75, 78.25) {0};
		\node [style={red_f}] (70) at (-14.75, 78.25) {1};
		\node [style={red_nf}] (71) at (-11.75, 78.25) {$k$};
		\node [style={red_f}] (72) at (-16.25, 84.5) {$\emptyset$};
	\end{pgfonlayer}
	\begin{pgfonlayer}{edgelayer}
		\draw (4) to (10);
		\draw (4) to (3);
		\draw (4) to (2);
		\draw (4) to (1);
		\draw (5) to (3);
		\draw (5) to (2);
		\draw (5) to (1);
		\draw (1) to (6);
		\draw (9) to (10);
		\draw (8) to (10);
		\draw (7) to (10);
		\draw (11) to (9);
		\draw (11) to (8);
		\draw (11) to (7);
		\draw (22) to (6);
		\draw (22) to (7);
		\draw (25) to (26);
		\draw (24) to (26);
		\draw (23) to (26);
		\draw (27) to (25);
		\draw (27) to (24);
		\draw (27) to (23);
		\draw (26) to (10);
		\draw (25) to (9);
		\draw (24) to (8);
		\draw (23) to (7);
		\draw (22) to (28);
		\draw (23) to (28);
		\draw (31) to (32);
		\draw (30) to (32);
		\draw (29) to (32);
		\draw (33) to (31);
		\draw (33) to (30);
		\draw (33) to (29);
		\draw (36) to (37);
		\draw (35) to (37);
		\draw (34) to (37);
		\draw (38) to (36);
		\draw (38) to (35);
		\draw (38) to (34);
		\draw (37) to (32);
		\draw (36) to (31);
		\draw (35) to (30);
		\draw (34) to (29);
		\draw (34) to (39);
		\draw (42) to (43);
		\draw (41) to (43);
		\draw (40) to (43);
		\draw (44) to (42);
		\draw (44) to (41);
		\draw (44) to (40);
		\draw (47) to (48);
		\draw (46) to (48);
		\draw (45) to (48);
		\draw (49) to (47);
		\draw (49) to (46);
		\draw (49) to (45);
		\draw (48) to (43);
		\draw (47) to (42);
		\draw (46) to (41);
		\draw (45) to (40);
		\draw (45) to (50);
		\draw (54) to (40);
		\draw (52) to (29);
		\draw (52) to (22);
		\draw (54) to (52);
		\draw (37) to (26);
		\draw (25) to (36);
		\draw (24) to (35);
		\draw (23) to (34);
		\draw (10) to (32);
		\draw (27) to (38);
		\draw (9) to (31);
		\draw (8) to (30);
		\draw (7) to (29);
		\draw (37) to (48);
		\draw (36) to (47);
		\draw (35) to (46);
		\draw (34) to (45);
		\draw (31) to (42);
		\draw (32) to (43);
		\draw (52) to (39);
		\draw (54) to (50);
		\draw (30) to (41);
		\draw (29) to (40);
		\draw (11) to (27);
		\draw (33) to (38);
		\draw (44) to (49);
		\draw (56) to (55);
		\draw (58) to (57);
		\draw [style=dashed] (56) to (48);
		\draw [style=dashed] (56) to (47);
		\draw [style=dashed] (58) to (46);
		\draw [style=dashed] (58) to (49);
		\draw [style=dashed] (58) to (45);
		\draw (58) to (56);
		\draw (57) to (55);
		\draw [style=dashed] (55) to (43);
		\draw [style=dashed] (55) to (42);
		\draw [style=dashed] (57) to (44);
		\draw [style=dashed] (57) to (41);
		\draw [style=dashed] (57) to (40);
		\draw [style=dashed] (59) to (54);
		\draw (57) to (59);
		\draw (61) to (58);
		\draw [style=dashed] (61) to (50);
		\draw (59) to (61);
		\draw (26) to (62);
		\draw (26) to (63);
		\draw (64) to (63);
		\draw (64) to (62);
		\draw (64) to (28);
		\draw (39) to (28);
		\draw (39) to (50);
		\draw (66) to (65);
		\draw (67) to (66);
		\draw (65) to (56);
		\draw (66) to (58);
		\draw (67) to (61);
		\draw (65) to (68);
		\draw (68) to (69);
		\draw (68) to (70);
		\draw (68) to (71);
		\draw (66) to (71);
		\draw (64) to (72);
		\draw (69) to (72);
		\draw (72) to (70);
		\draw (71) to (72);
		\draw (67) to (72);
		\draw (33) to (11);
		\draw (38) to (49);
		\draw (33) to (44);
		\draw (5) to (11);
		\draw (3) to (9);
		\draw (8) to (2);
		\draw (7) to (1);
	\end{pgfonlayer}
\end{tikzpicture}}
\caption{\label{figure:all}The lattice of all reducts known to us, ordered by the inclusion of their automorphism groups.}
\end{figure}

\newpage

\end{document}